\pgfplotsset{compat=1.16}
\theoremstyle{theorem}
\newtheorem{theorem}{Theorem}
\newtheorem{corollary}{Corollary}
\newtheorem{lemma}{Lemma}
\theoremstyle{definition}
\newtheorem{definition}{Definition}
\newtheorem*{problem*}{Problem Statement}
\newtheorem{assumption}{Assumption}
\theoremstyle{remark}
\newtheorem{remark}{Remark}
\newtheorem{example}{Example} 
\DeclareMathOperator*{\argmax}{argmax}
\newcommand{\cufrac}[2]{\tfrac{\partial #1}{\partial #2}} 
\def\BibTeX{{\rm B\kern-.05em{\sc i\kern-.025em b}\kern-.08em
		T\kern-.1667em\lower.7ex\hbox{E}\kern-.125emX}}
\def\B{{\mathbb B}}
\def\x{\mathbf{x}}
\def\B{\mathit{B}}
\def\mR{\mathsf R}
\def\R{{\mathbb R}}
\def\u{\mathbf u}
\def\v{\mathbf v}
\def\H{\mathit H}
\def\C{\mathcal C}
\def\T{\mathsf T}
\def\V{\mathcal V}
\def\G{{\bm\gamma^{\star}}}
\def\Th{{\bm\theta^{\star}}}
\begin{document}
	\title{Optimal Role Assignment for Multiplayer Reach-Avoid Differential Games in 3D Space}
	\author{Abinash Agasti, Puduru Viswanadha Reddy, \IEEEmembership{Member, IEEE}, Bharath Bhikkaji
		\thanks{A. Agasti, P. V. Reddy and B. Bhikkaji are with the Department of Electrical Engineering, Indian Institute of Technology-Madras, Chennai, 600036, India.  
			(e-mail: abinashagasti@outlook.com, vishwa@ee.iitm.ac.in, bharath@ee.iitm.ac.in)}} 
	
	\maketitle
	\thispagestyle{empty}
	\begin{abstract} 
		In this article an $n$-pursuer versus $m$-evader reach-avoid differential game in 3D space is studied. A team of evaders aim to \textit{reach} a stationary target while \textit{avoiding} capture by a team of pursuers. The multiplayer scenario is formulated in a differential game framework. This article provides an optimal solution for the particular case of $n=m=1$ and extends it to a more general scenario of $n\geq m$ via an optimal role assignment algorithm based on a linear program. Consequently, the pursuer and the evader winning regions, and the Value of the game are analytically characterized providing optimal strategies of the players in state feedback form.
		
	\end{abstract}
	
	\begin{IEEEkeywords} Reach-Avoid Differential Games; Optimal Control; Autonomous Systems
	\end{IEEEkeywords}
	
	\section{Introduction}   
	Multiplayer reach-avoid games  are mathematical abstractions of strategic interactions involving a team of \textit{evaders} that strives to reach a predetermined goal while avoiding an adversarial team of \textit{pursuers}. Such interactions arise in various real-world scenarios, including safe motion/path planning, region protection in the presence of hostile agents, dynamic collision avoidance and robotic herding \cite{tomlin_2015,karaman_frazzoli_2011,oyler_2016,paranjape_2018,chung_2011}. 
	
	
	Differential game theory \cite{isaacs_1965,basar_1999} is a commonly used mathematical framework to model and analyze multiplayer reach-avoid games. The general approach in any such reach-avoid differential game (RADG) first involves finding the winning regions of both the parties, referred to as Game of Kind (GoK). After solving the GoK, the aim is then to find the optimal strategies of the players in state feedback form, referred to as Game of Degree (GoD). This requires the solution to the Hamilton-Jacobi-Isaacs partial differential equation (HJI-PDE) for the reach-avoid scenario. Using this differential game theory framework, a wide variety of two player reach-avoid games have been solved in the literature \cite{margellos_2011,yan_2d1a_2019,yan_guarding_2022,tomlin_2005}. However, solving the HJI-PDE analytically for a multiplayer scenario has been shown to be nontrivial and a challenging task \cite{garcia_2019,pachter_2018,pachter_2019,moll_2022}. 
	To solve a multiplayer reach-avoid differential game (MRADG), two player RADG solutions can be used as building blocks. This involves pairing pursuers to evaders, thus breaking down the multiplayer game into multiple two player subgames. This multilayered solution approach requires an optimal assignment of pursuers to evaders (involving discrete variables), and optimal strategies for the consequent interactions (involving continuous variables). The solutions to the smaller subgames must then be aggregated to obtain a solution of the multiplayer HJI-PDE to provide a guarantee of optimality. Thus, the MRADG is posed as a hybrid multi-agent decision problem. 
	
	
	
	In the recent years, there have been multiple attempts to address the hybrid nature of the problem arising in MRADGs in 2D space. In \cite{tomlin_2017,cdc_23}, the authors first design a computationally efficient solution to a small subgame and then use a maximum matching strategy for assignment of multiple players. Zhou  et al.  \cite{tomlin_2018} aim to reduce the computational burden through approximated solutions of the HJI-PDE. In \cite{yan_assignment_2020}, the authors analytically solve the GoK but do not provide optimal strategies (GoD).  A complete analytical solution to a multiplayer game with homogenous agents was provided along with the optimal strategies in state feedback form by Garcia et al. \cite{garcia_2021}. However, to determine the optimal assignment, they enumerate all feasible assignments relevant to the game, resulting in an exponential increase in complexity with the number of players.
	
	Motivated by military and surveillance applications that require aerial swarms to operate in adversarial environments \cite{Chung:18}, MRADGs with players interacting in 3D space have been considered in the literature.  In \cite{garcia_2020}, the authors provide solutions to both GoK and GoD for an RADG involving $2$ pursuers and $1$ evader with all agents moving at the same speed. In another work, Yan et al. \cite{yan_matching_2022}, the authors present an analytical solution for a subgame with multiple pursuers and a single evader. This solution is then used as a building block to develop a polynomial-time constant factor approximation for an NP-hard assignment algorithm. As it is an approximate solution, it does not satisfy the multiplayer HJI-PDE. In summary, all existing approaches for solving an MRADG (in 2D or 3D space) extend the solution obtained from smaller subgames. In the proposed approaches found in the literature so far, either a computationally feasible extension from a smaller subgame or the obtaining of optimal strategies in closed-form (by solving the HJI-PDE) has been considered. However, a computationally feasible extension simultaneously obtaining the optimal strategies in closed-form has not been explored in the literature.  
	
	\subsubsection*{Contributions} 
	The contribution of this paper lies in providing computationally efficient solutions to  a class of MRADGs involving $m$ evaders and $n$ pursuers interacting in 3D space. In particular, we introduce a cost-benefit framework for optimal role assignment of pursuers to evaders. Using this, we obtain
    optimal strategies of the players (in closed-form) by solving the corresponding HJI-PDE. The main results of our article are summarized as follows:
	\begin{enumerate}
		\item In Section \ref{sec:1v1}, we study the \texttt{1v1} subgame involving one pursuer and one evader with unequal speeds. First, we begin by characterizing the pursuer and evader winning regions (GoK) in Lemma \ref{lem:1v1barrier}. Then, in Theorem \ref{theorem:1v1_varspeed} and Theorem \ref{theorem:1v1_varspeed_E}, we obtain the Value function of the \texttt{1v1} game in the pursuer and evader winning region respectively (GoD). This facilitates the provision of optimal strategies of the players in state feedback form. We note that these results extend the work in \cite{garcia_2020}, where players were assumed to have equal speeds in a similar scenario.
		\item  Under few assumptions on players' interactions (Assumption \ref{assum:assumption1}) in an MRADG, we provide a cost-benefit framework for solving the optimal assignment of pursuers to evaders as a Shapley-Shubik assignment game \cite{shapley_1971}. In particular, in Theorem \ref{thm:lp}, we show that the set of optimal assignments can be obtained by solving a linear programming problem. In Lemma \ref{lemma:win} and Lemma \ref{lemma:invariance}, we show that the consequence of the game under optimal play is invariant with these optimal assignments. Finally, in Theorem \ref{thm:multibarrier}, we obtain the winning regions of the pursuer and evader teams (GoK).			
		\item In Theorem \ref{thm:valueP}, we obtain the optimal strategies of the players in the pursuer winning region (GoD). Then, we demonstrate in Lemma \ref{lem:refine}, that the set of optimal assignments requires a refinement for the evader winning region, for which we obtain the optimal strategies of the players in Theorem \ref{thm:valueE} (GoD).
	\end{enumerate}
	\subsubsection*{Novelty and Differences with Existing Literature}
	The novel aspects of our paper and differences with the existing literature are  summarized as follows:
	
	\begin{enumerate}
		\item While prior literature often assumes agents with homogeneous capabilities or superior speed pursuers \cite{yan_guarding_2022,pachter_2019,moll_2022,garcia_2020,garcia_2021}, we introduce a more realistic scenario by allowing some evaders to be faster than some pursuers. 
		\item Typically in the related literature, 	first a solution satisfying the HJI-PDE for the \texttt{1v1} case is found, which is then extended to multiplayer scenarios using matching algorithms \cite{tomlin_2017,yan_assignment_2020,yan_matching_2022}.  However, this approach lacks optimality as it doesn't satisfy the HJI-PDE for the multiplayer game. In this work, an assignment scheme is designed that aids in finding a solution that satisfies the HJI-PDE for the multiplayer game. 
		\item The assignment scheme introduced in this work is fundamentally a maximum weighted matching algorithm while most of the literature in multiplayer setting \cite{tomlin_2017,yan_assignment_2020,garcia_2021,yan_matching_2022} uses a version of the maximum matching algorithm. While being able to ensure capturing the most number of evaders, the maximum matching algorithms fail to distinguish between a pair of assignments leading to the capture of the same number of evaders. But, the assignment scheme in our work distinguishes between any such pair of assignments based on the payoff received from the underlying interactions dictated by the matchings. 
		\item 
		Our work provides a comprehensive solution for the MRADG by addressing GoDs in both the pursuer and evader winning regions as opposed to existing literature \cite{pachter_2018,yan_guarding_2022,garcia_2020,garcia_2021} where solution is found primarily in the pursuer winning region. 
	\end{enumerate}
	
	
	\subsubsection*{Organization}
	The article is organized as follows. The problem formulation of the MRADG is stated in Section \ref{sec:prelim}. Section \ref{sec:1v1} studies the \texttt{1v1} subgame involving one pursuer and one evader. Using these results, in Section \ref{sec:multiplayer} we provide a cost-benefit framework to design an optimal assignment of pursuers to evaders. Then, the assignment is used to characterize the pursuer and evader winning regions in Section \ref{subsec:gok} and provide the Value function of the multiplayer game in Section \ref{subsec:god}. Section \ref{sec:num} provides a few numerical illustrations, and finally conclusions are drawn in Section \ref{sec:conc} along with citing some future directions.

	\section{Preliminaries and Problem Formulation}
	\label{sec:prelim}
	We consider an MRADG consisting of $n$ pursuers and $m$ evaders. The evaders aim to reach a stationary target, while the pursuers aim to prevent this outcome. The target is assumed without loss of generality to be the origin. A player in the evading team is denoted by $E_i$, $i\in M:=\{1,\cdots,m\}$, and one in the pursuing team by $P_j$, $j\in N:=\{1,\cdots,n\}$. The players are assumed to be holonomic, and interact in  3-dimensional Euclidean space. The position vectors (or states) of $E_i$ and $P_j$ are denoted respectively by $\x_{E_i}:=(x_{E_i} ,y_{E_i} ,z_{E_i})\in \mathbb{R}^3$ and $\x_{P_j} :=(x_{P_j} ,y_{P_j} ,z_{P_j} )\in \mathbb{R}^3$  for   $i\in M$  and $j\in N$. The global state space of the differential game is denoted by $\x:=(\x_E,\x_P)\in \mathbb R^{3(m+n)}$, where $\x_E:=(\x_{E_1},...,\x_{E_m})$ and $\x_P:=(\x_{P_1},...,\x_{P_n})$.  We denote the controls of the players $E_i$ and $P_j$ respectively by $\u_{E_i}:=(u_{x_i},u_{y_i},u_{z_i})$  and $\v_{P_j}:=(v_{x_j},v_{y_j},v_{z_j})$  for $i\in M$ and $j\in N$. The joint team controls for the evaders and pursuers are denoted by $\u:=(\u_{E_1},...,\u_{E_m})$ and $\v:=(\v_{P_1},...,\v_{P_n})$ respectively. We assume that players $E_i$ and $P_j$ move with constant speeds $U_i> 0$ and $V_j> 0$ respectively for $i\in M$ and $j\in N$. Consequently, the speed ratios $\alpha_{ij}:=U_i/V_j$ can take any real positive value for all $i\in M$ and $j\in N$. Now, the admissible control sets of $E_i$ and $P_j$ are given respectively by $\mathcal U_i:=\{\u_{E_i}\in \mathbb R^3:||\u_{E_i}|| =U_i \}$ and $\mathcal V_i:=\{\v_{P_j}\in \mathbb R^3:||\v_{P_j}|| =V_j \}$, where $||.||$ denotes the Euclidean norm. The admissible controls for the evader and pursuer teams are then given by the cartesian products $\mathcal U:=\prod\limits_{i\in M}\mathcal U_i$ and $\mathcal V:=\prod\limits_{j\in N}\mathcal V_j$.
	
	The players have simple motion dynamics given by

	\begin{equation}
		\begin{aligned}
			&\dot{x}_{E_i}(t) = u_{x_i}(t),\quad \dot{y}_{E_i}(t) = u_{y_i}(t),\quad \dot{z}_{E_i}(t) = u_{z_i}(t), \\
			&\dot{x}_{P_j}(t) = v_{x_j}(t),\quad \dot{y}_{P_j}(t) = v_{y_j}(t),\quad \dot{z}_{P_j}(t) = v_{z_j}(t)
			\label{eq:multi_dynamics}
		\end{aligned} 
	\end{equation}
	with initial positions $\x_{E_i}(0) = (x_{E_{i_0}}, y_{E_{i_0}}, z_{E_{i_0}})$ and $\x_{P_j}(0) = (x_{P_{j_0}}, y_{P_{j_0}}, z_{P_{j_0}})$ for $i\in M$ and $j\in N$. The global initial state of the system is denoted by $\x_0 \in \mathbb{R}^{3(m+n)}$. 
	\begin{remark}
		Note that the agents have a single integrator dynamics that might seem simplistic to model any real-world autonomous system. However, many autonomous systems often have a dynamics that is affine in control or is feedback linearizable \cite{Francis_Maggiore_2016}, i.e., of a form $\dot\x=f(\x)+g(\x)\u$. In such cases, a simple transformation $\u=g^{-1}(\x)(-f(\x)+\tilde\u)$ converts the system into a single integrator model $\dot\x=\tilde\u$. Thus, the single integrator model can be used to provide analytical results on the optimal behaviour of complex multiplayer interactions. These solutions serve as a high-level controller which can be used to drive a low-level controller that further drives the actuators of the autonomous system. 
	\end{remark}
	
	\subsubsection*{Termination criterion and termination sets}
	The MRADG considered in this paper is characterized by two termination criteria: either all the evaders are captured by the pursuer team or at least one of the evaders reaches the origin. An evader is defined to have reached the origin when its state vector coincides with the origin. On the other hand, it is defined to be captured when its state vector coincides with that of a pursuer. Once an evader is captured, both the captured evader and the capturing pursuer come to a complete stop. In other words, their speeds drop to zero, and they cease to participate further in the game. 	To represent the terminal criteria, we first define binary variables $\{\mu_{ij}\in \{0,1\},~ i\in M,~j\in N\}$, where $\mu_{ij}=1$ when $P_j$ is assigned to capture $E_i$, and $0$ otherwise.
	
	The termination set for the game is a subset of the global state space defined by
	\begin{equation}
		\T:=\T_E\cup\T_P, \label{eq:termination}
	\end{equation}
	where 
	\begin{equation}
		\begin{aligned}
			\T_E&:=\{\x\in \mathbb R^{3(n+m)}~\big\vert~ \exists \tilde M\subset M \\
			&\text{ such that } \forall i\in\tilde M \text{ we have }\vert\vert\x_{E_i} \vert\vert=0
			\}
		\end{aligned} 
		\label{eq:terminalE}
	\end{equation}
	represents the game outcome when  at least one of the evaders is capable of reaching the target. Note that the set $\tilde M$ is assumed to be the maximal set, i.e., there do not exist any more evaders who manage to win against the pursuers assigned to them. Next, the game outcome when all the evaders are captured by the pursuer team away from the target is represented by
	\begin{equation}
		\begin{aligned}
			\T_P=&\{ \x\in \mathbb R^{3(n+m)}~\big|~\forall i\in M\ \exists j\in N \text{ such that } \\
			&\mu_{ij}=1 \text{ and }	\vert\vert \x_{P_j}-\x_{E_i} \vert\vert=0\text{ and } \vert\vert \x_{E_i} \vert\vert>0 \}.
		\end{aligned}
	\end{equation}
	Further, the associated termination time is given by
	\begin{equation}
		t_f:=\inf\{t\in \R_+: \x(t)\in\T\},
	\end{equation} where $\R_+$ denotes the nonnegative real line.  

	\subsubsection*{Game of Kind and Game of Degree}
	As there are two possible outcomes in any MRADG,  
	we solve the GoK to obtain a partition of the global state space into winning regions for the pursuer and evader teams. All the initial positions in the state space are classified into the pursuer winning region if there exists a strategy for the pursuer team that guarantees a win independent of the evader team strategies. Note that a win for the pursuer team is defined as all the evaders being captured away from the target. Similarly, all the initial positions in the state space are classified into the evader winning region if there exists a strategy for the evader team that guarantees a win independent of the pursuer team strategies. Note that a win for the evader team is defined as at least one evader reaching the target. Further, the initial positions that result in a pursuer and an evader reaching the target simultaneously render the target unsafe and hence are counted in the evader winning region.
	
	 Let $\mR_P$ and $\mR_E$ denote the pursuer and evader winning regions respectively which partition the global state space $\R^{3(m+n)}$. Then, we solve the GoD within these regions to obtain optimal strategies of the players in state feedback form. To this end, we first define the zero-sum payoff for the multiplayer game separately in $\mR_P$ and $\mR_E$. Let the terms $\x_{E_{i_f}}$ and $\x_{P_{j_f}}$ denote the terminal positions of $E_i$ and $P_j$ respectively. Now, consider the cost function for the game starting in the region $\mR_E$ defined as
	\begin{equation}
		J(\u(.),\v(.);\x_0)=-\sum\limits_{i\in\tilde{M}}\min\limits_{j\in \tilde{N}_i}\vert\vert\x_{P_{j_f}}\vert\vert. \label{eq:multi_Ecost}
	\end{equation}
	Here, $\tilde{N}_i=\{j\in N:\mu_{ij}=1\}$ is the set of all pursuers assigned to evader $E_i$, and $\tilde M$ is as defined in \eqref{eq:terminalE}. This cost accumulates the individual costs corresponding to each evader $E_i$ reaching the target at the terminal time. 
	This individual cost is given by the closest distance of all pursuers assigned to $E_i$, from the target. The inclusion of the negative sign keeps the convention that pursuers are considered the maximizing team for the cost in \eqref{eq:multi_Ecost}. This convention also applies to the cost considered when starting in the region $\mR_P$, given by
	
	\begin{equation}
		J(\u(.),\v(.);\x_0)=\sum\limits_{i\in M}\vert\vert\x_{E_{i_f}}\vert\vert. \label{eq:multi_Pcost}
	\end{equation}
	This cost function represents an accumulation of individual costs corresponding to the terminal distance of each evader from the target.
	The optimal payoff in this game, known as the Value of the game, is defined for each initial state $\x_0\in\R^{3(m+n)}$ as
	\begin{equation}
		\V(\x_0):=\min\limits_{\u(.)}\max\limits_{\v(.)}J(\u(.),\v(.);\x_0) \label{eq:valfun}
	\end{equation} 
	subject to \eqref{eq:multi_dynamics} and \eqref{eq:termination}, where $\u(.)$ and $\v(.)$ are the teams' state feedback strategies.

	The Value function \eqref{eq:valfun} can be obtained as the solution to the Hamilton-Jacobi-Isaacs partial differential equation (HJI-PDE) given by 
	\begin{equation}
		\min\limits_{\u\in \mathcal U}\max\limits_{\v\in \mathcal V}\left[ \left\langle\nabla\V(\x),(\u,\v) \right\rangle \right] =0, \label{eq:hji}
	\end{equation}
	where $\mathcal U$ and $\mathcal V$ are the admissible control sets of the evader and pursuer teams respectively.
	\subsubsection*{Problem formulation and solution approach}	
	As mentioned in the introduction, an MRADG is a hybrid multiplayer decision problem involving discrete decision variables to obtaining optimal assignments of pursuers to evaders, as well as continuous decision variables to obtaining optimal strategies of the players. Naturally, the following questions arise while solving this co-design problem.
	\begin{enumerate}
		\item Is there a criterion that could be used for optimally assigning the pursuers to the evaders?
		\item Is there a way to characterize winning regions for the pursuer and evader teams, and obtain optimal strategies for the players in these regions? 
	\end{enumerate}
	In this paper, we address the above questions using a two-layered approach. First, we analyze  a smaller sub-game involving individual strategic interactions between a pursuer and an evader. This study enables us to compute optimal payoffs that players can obtain from the interaction. 
	In particular, we characterize  pursuer and evader winning regions (GoK) and analytically solve the corresponding HJI-PDE to obtain optimal strategies (GoD) of the players in state feedback form.
	Using the solutions from the smaller subgames, we then address the first question by proposing a cost-benefit framework for optimally assigning pursuers to evaders. We address the second question by constructing a barrier function induced by the optimal assignment, which partitions the state space into winning regions for the pursuer and evader teams. Further, we obtain state feedback optimal strategies for all the players in these regions. Before we proceed with our solution approach, we recall the following preliminary result which will be used throughout the remainder of the paper.
	\begin{lemma}
		Consider the MRADG described by  \eqref{eq:multi_dynamics},    \eqref{eq:multi_Ecost} and \eqref{eq:multi_Pcost}. The optimal headings are constant and the optimal trajectories are straight lines. \label{theorem:prelim}
	\end{lemma}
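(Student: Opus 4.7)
The plan is to apply the necessary conditions of optimality (Pontryagin's Maximum Principle, or equivalently the HJI characterization already displayed in \eqref{eq:hji}) and exploit two structural features of the problem: the payoffs \eqref{eq:multi_Ecost} and \eqref{eq:multi_Pcost} are purely terminal (no running cost), while the dynamics \eqref{eq:multi_dynamics} are single-integrator, so the right-hand side does not depend on $\x$. First I would form the Hamiltonian
$$H(\x,\u,\v,\lambda) = \langle \lambda,(\u,\v)\rangle,$$
where $\lambda = \nabla \V(\x)$ is partitioned conformally with $\x=(\x_E,\x_P)$ into blocks $\lambda_{E_i}$ and $\lambda_{P_j}$. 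Because the Hamiltonian is independent of $\x$, the costate evolution reduces to $\dot\lambda = -\partial H/\partial \x = 0$, so $\lambda$ stays constant along any optimal trajectory between termination events.

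Next I would carry out the pointwise min--max in \eqref{eq:hji} over the admissible control sets $\mathcal U_i$ and $\mathcal V_j$. Since $H$ is linear in each control block under a sphere constraint $\|\u_{E_i}\| = U_i$, $\|\v_{P_j}\| = V_j$, the (generic) extremizers are the antipodal unit vectors
$$\u_{E_i}^{*}(t) = -U_i\,\frac{\lambda_{E_i}}{\|\lambda_{E_i}\|}, \qquad \v_{P_j}^{*}(t) = V_j\,\frac{\lambda_{P_j}}{\|\lambda_{P_j}\|},$$
with only straightforward modification needed if a costate block vanishes on a measure-zero set. Combining this with the constancy of $\lambda$ forces each $\u_{E_i}^{*}$ and $\v_{P_j}^{*}$ to be constant in time. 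Since $\dot \x = (\u,\v)$ under \eqref{eq:multi_dynamics}, constant controls produce straight-line motion at constant speed, which is precisely the claim about fixed headings and straight-line trajectories.

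The main obstacle, and the only genuine subtlety, is guaranteeing that $\V$ is differentiable along the trajectory so that $\lambda = \nabla \V$ is well defined and the PMP computation above is rigorous; this is the standard caveat for HJI-based arguments in differential games. I would handle it by restricting attention to the interior of the winning regions $\mathsf{R}_P$ and $\mathsf{R}_E$ that the paper characterizes later, where the game has a unique outcome and $\V$ is smooth, and then invoking a continuity/limiting argument on the lower-dimensional switching manifolds (in particular on the barrier itself) to extend the straight-line property to the closure. No additional machinery is needed, so the lemma follows directly from the two observations above --- state-independence of $H$ gives constant costates, and linearity of $H$ in the controls gives heading-locked extremizers.
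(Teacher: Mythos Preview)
Your proposal is correct and follows essentially the same argument as the paper: form the Hamiltonian, use state-independence of the dynamics to conclude $\dot\lambda=0$, and then infer constant optimal controls and hence straight-line trajectories. Your version is slightly more detailed (explicit extremizer formulas and the differentiability caveat), but the underlying reasoning is identical.
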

	\begin{proof}
		As the cost functions  \eqref{eq:multi_Ecost} and \eqref{eq:multi_Pcost} are of terminal type, the  Hamiltonian associated with the saddle-point problem in \eqref{eq:valfun}
		is written as 
		\begin{equation}
			\begin{aligned}
				\H &=\sum\limits_{i\in M}\langle \lambda_{E_i},\dot\x_{E_i}\rangle+\sum\limits_{j\in N} \langle\lambda_{P_j},\dot\x_{P_j}\rangle =\sum\limits_{i\in M} \langle \lambda_{E_i},\u_i\rangle+\sum\limits_{j\in N} \langle \lambda_{P_j},\v_j\rangle,
			\end{aligned}
		\end{equation}
		where  $\lambda=(\lambda_{E_1},...,\lambda_{E_m},\lambda_{P_1},...,\lambda_{P_n})\in\R^{3(m+n)}$ denotes the costate vector. As the Hamiltonian is independent of $\x$   the costate dynamics is obtained as $\dot\lambda=-\frac{\partial}{\partial \x}\H=0$. So, the costate vector remains constant under optimal play. The saddle-point controls must satisfy $\min_{\u}\max_{\v}H=0$ and thus depend on the costates. As the costates are constant, this implies that the optimal controls also remain constant.  Consequently, the optimal trajectories are straight lines. 
	\end{proof}
	%
	\section{1 versus 1 Differential Game}
	\label{sec:1v1}
	In this section, we analyze a reach-avoid differential game with one pursuer and one evader, referred to as the \texttt{1v1} RADG for brevity. To maintain the notation consistent with MRADG, we label the evader and the pursuer as $E_i$ and $P_j$ respectively. 
	
	The solution to the \texttt{1v1} RADG entails finding the Apollonius sphere, which is the locus of all the points that the pursuer and the evader can reach simultaneously. This desired locus is given by
	\[\frac{\vert\vert \x-\x_{P_j}\vert\vert^2}{V_j^2}=\frac{\vert\vert \x-\x_{E_i}\vert\vert^2}{U_i^2} \Rightarrow \alpha_{ij}^2\vert\vert \x-\x_{P_j}\vert\vert^2=\vert\vert \x-\x_{E_i}\vert\vert^2.\]
	Upon simplification, the Apollonius sphere is calculated as
	\begin{equation}
		(x-x_{c_{ij}})^2+(y-y_{c_{ij}})^2+(z-z_{c_{ij}})^2=r_{c_{ij}}^2, \label{eq:apsphere_E}
	\end{equation}
	where $x_{c_{ij}}=\frac{x_{E_i}-\alpha_{ij}^2x_{P_j}}{1-\alpha_{ij}^2}$, $y_{c_{ij}}=\frac{y_{E_i}-\alpha_{ij}^2y_{P_j}}{1-\alpha_{ij}^2}$, $z_{c_{ij}}=\frac{z_{E_i}-\alpha_{ij}^2z_{P_j}}{1-\alpha_{ij}^2}$, and $r_{c_{ij}}=\frac{\alpha_{ij}}{1-\alpha_{ij}^2}||\x_{P_j}-\x_{E_i}||$. 
	
	The following result pertains to the solution of the GoK for the \texttt{1v1} RADG.

	\begin{lemma}
		\label{lem:1v1barrier}
		The   function $\B_{ij}:\R^6\shortrightarrow\R$ defined by 
		\begin{equation}
			\B_{ij}(\x)=R_{E_i}^2-\alpha_{ij}^2R_{P_j}^2, \label{eq:1v1_barrier}
		\end{equation}
		referred to as the barrier function for the \texttt{1v1} RADG where $R_{E_i}=\vert\vert \x_{E_i}\vert\vert$ and $R_{P_j}=\vert\vert \x_{P_j}\vert\vert$. This barrier function partitions the state space into the pursuer winning region as $\mR_P:=\{\x\in\R^6:\B_{ij}(\x)>0\}$ and the evader winning region as $\mR_E:=\{\x\in\R^6:\B_{ij}(\x)\leq0\}$. 
	\end{lemma}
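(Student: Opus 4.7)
The plan is to split the analysis into the two cases $\B_{ij}(\x)\leq 0$ and $\B_{ij}(\x)>0$ and, in each case, construct a state-feedback strategy for one team that guarantees the desired outcome against every admissible opposing play. As a useful reformulation, note that $\B_{ij}(\x)\leq 0$ is equivalent to $R_{E_i}/U_i \leq R_{P_j}/V_j$, i.e., the evader's minimum time to the origin does not exceed the pursuer's; via \eqref{eq:apsphere_E} it is also equivalent to the origin lying in the set $S:=\{x\in\R^3 : \alpha_{ij}\|x-\x_{P_j}\| \geq \|x-\x_{E_i}\|\}$ of points the evader can reach no later than the pursuer. The Apollonius sphere is the boundary $\partial S$, the locus of simultaneous arrival.

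For the evader region ($\B_{ij}(\x)\leq 0$), I would prescribe the constant heading $\u_{E_i}^{\star}=-U_i\,\x_{E_i}/R_{E_i}$, driving the evader straight to the origin in time $T_E = R_{E_i}/U_i$. The key step is to show that no pursuer trajectory intercepts this segment. Parametrizing the segment as $s\,\x_{E_i}$ with $s\in[0,1]$ and expanding $s\,\x_{E_i}\in S$ yields a quadratic inequality in $s$ that holds at $s=0$ by the barrier hypothesis and at $s=1$ by the triangle inequality. For intermediate $s$ it follows from convexity of $S$ (when $\alpha_{ij}\leq 1$, where $S$ is a closed ball) or from a direct algebraic estimate (when $\alpha_{ij}>1$). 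Combined with Lemma~\ref{theorem:prelim}, which restricts the pursuer's optimal response to straight lines, this rules out capture before time $T_E$.

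For the pursuer region ($\B_{ij}(\x)>0$), the origin lies strictly in the interior of $S^c$, so the straight line from $\x_{E_i}$ through the origin must cross the Apollonius sphere at a point $\x^{\star}$ lying on the $\x_{E_i}$-to-origin segment with $\|\x^\star\|>0$. I would have the pursuer aim straight for $\x^\star$: by definition of the sphere both players reach $\x^\star$ at the same instant, and the pursuer intercepts the evader there, strictly before the origin is reached. For evader trajectories deviating from the straight-to-origin heading, an analogous Apollonius aim-point argument handles the deviation, and Lemma~\ref{theorem:prelim} restricts attention to straight-line plays.

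The main technical obstacle is the faster-evader subcase $\alpha_{ij}>1$. In the evader region $S$ is the exterior of a ball rather than a convex set, so the segment-inclusion step requires a direct algebraic verification rather than an appeal to convexity. In the pursuer region the pursuer lacks the speed advantage, and the winning argument rests entirely on the Apollonius aim-point construction. Boundary configurations $\B_{ij}(\x)=0$ correspond to simultaneous arrival at the target and are assigned to $\mR_E$ by the tie-breaking convention stated in Section~\ref{sec:prelim}.
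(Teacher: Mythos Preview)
The paper takes a much shorter route than yours. It simply rewrites $\B_{ij}(\x)>0$ as $R_{E_i}/U_i > R_{P_j}/V_j$, observes that this means the pursuer reaches the origin strictly before the evader, and asserts that this ``clearly corresponds to'' the pursuer winning (symmetrically for $\B_{ij}\leq 0$). There is no Apollonius construction and no explicit interception strategy; after this time-to-origin comparison, a short set-theoretic argument identifies $\mR_P$ and $\mR_E$ with the super- and sub-level sets of $\B_{ij}$.

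Your more constructive approach has a genuine gap in the $\alpha_{ij}>1$ subcase of the evader region. The claim that the segment from $\x_{E_i}$ to the origin lies in $S$ is \emph{false} in general there: when $\alpha_{ij}>1$ the set $S$ is the closed exterior of the Apollonius ball, and a segment with both endpoints in that exterior can cross the interior. A collinear configuration (origin, then $\x_{P_j}$, then $\x_{E_i}$ on a common line, with $\B_{ij}<0$) already produces a counterexample in which the pursuer, heading straight toward the origin, intercepts the straight-to-origin evader strictly before the target---this is precisely the pathology the paper itself flags in the remark and Figure following Theorem~\ref{theorem:1v1_varspeed_E}. So the promised ``direct algebraic estimate'' cannot go through, and your straight-to-origin strategy does not certify $\mR_E$ when the evader is faster. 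A secondary issue: in your pursuer-region step, aiming at a \emph{fixed} Apollonius point $\x^\star$ is not robust to evader deviations, and invoking Lemma~\ref{theorem:prelim} to restrict the evader to straight lines is circular for the Game of Kind, since that lemma characterizes GoD-optimal trajectories whereas here you must defeat every admissible evader play.
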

	\begin{proof}
		Since $\B_{ij}$ is well defined over its domain, the sets $\{\x\in\R^6:\B_{ij}(\x)>0\}$ and $\{\x\in\R^6:\B_{ij}(\x)\leq0\}$ form a partition of the state space $\R^6$. Note that the regions $\mR_P$ and $\mR_E$ also form a partition of the state space $\R^6$.
		
		
		Consider some $\x\in\R^6$ such that $\B_{ij}(\x)>0$, then $R_{E_i}^2>\alpha_{ij}^2R_{P_j}^2 \Rightarrow R_{E_i}^2/U_i^2>{R_{P_j}^2}/{V_j^2}$. Thus, starting from any point $\x\in\{\x\in\R^6:\B_{ij}(\x)>0\}$, the pursuer can reach the origin faster than the evader. This clearly corresponds to the pursuer winning, i.e., $\{\x\in\R^6:\B_{ij}(\x)>0\}\subseteq \mR_P$. Now, consider some $\x\in\R^{6}$ such that $\B_{ij}(\x)\leq0$, then $R_{E_i}^2/U_i^2\leq{R_{P_j}^2}/{V_j^2}$. Thus, starting from any point $\x\in\{\x\in\R^6:\B_{ij}(\x)\leq0\}$, the evader can reach the origin faster than the pursuer. This clearly corresponds to the evader winning, i.e., $\{\x\in\R^6:\B_{ij}(\x)\leq0\}\subseteq\mR_E$.
		
		Finally, consider the opposite direction, and let $\x\in\mR_P$. Suppose for contradiction $\B_{ij}(\x)\leq0$, then $\x\in\{\x\in\R^6:\B_{ij}(\x)\leq0\}$ which implies $\x\in\mR_E$. Hence $\B_{ij}(\x)$ must be positive resulting in $\mR_P\subseteq\{\x\in\R^6:\B_{ij}(\x)>0\}$. Using earlier part of the proof, this shows that $\mR_P=\{\x\in\R^6:\B_{ij}(\x)>0\}$. A similar result can be shown for the evader winning region to obtain $\mR_E=\{\x\in\R^6:\B_{ij}(\x)\leq0\}$.
		
		
	\end{proof}
	Now, we derive the optimal strategies of the players separately in the pursuer and evader winning
	regions in the next two results.  
	\begin{theorem}[Pursuer winning region]
		Consider the \texttt{1v1} RADG with $\x\in\mR_P$ and $\alpha_{ij}<1$. The Value function is $\C^1$ and it is the solution of the HJI-PDE \eqref{eq:hji}. The Value function is given by 
		\begin{equation}
			\V_{ij}^P(\x)=R_{c_{ij}}-r_{c_{ij}}, \label{eq:value_Pout}
		\end{equation}
		where $R_{c_{ij}}=\vert\vert \x_{c_{ij}} \vert\vert$, with $\x_{c_{ij}}=(x_{c_{ij}},y_{c_{ij}},z_{c_{ij}})$ 
		and $r_{c_{ij}}$ given by \eqref{eq:apsphere_E}. \label{theorem:1v1_varspeed}
	\end{theorem}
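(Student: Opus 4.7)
The plan is to verify that the candidate $\V_{ij}^P(\x) = R_{c_{ij}} - r_{c_{ij}}$ is $\C^1$ on $\mR_P$, satisfies the HJI-PDE \eqref{eq:hji} with the terminal boundary condition dictated by \eqref{eq:multi_Pcost}, and then invoke a standard verification argument for zero-sum differential games. The geometric picture driving the plan is that, by Lemma \ref{theorem:prelim}, optimal trajectories are straight lines, and with $\alpha_{ij}<1$ the pursuer can force interception anywhere on the Apollonius sphere \eqref{eq:apsphere_E}. Since the evader minimizes the terminal cost $\|\x_{E_f}\|$, it aims for the point of the sphere nearest the origin, namely $I^\star := (1-r_{c_{ij}}/R_{c_{ij}})\,\x_{c_{ij}}$, which satisfies $\|I^\star\|=R_{c_{ij}}-r_{c_{ij}}$ and therefore matches the candidate.

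First I would confirm regularity. A short algebraic computation yields $R_{c_{ij}}^2-r_{c_{ij}}^2 = \B_{ij}(\x)/(1-\alpha_{ij}^2)$, so on $\mR_P$ (where $\B_{ij}>0$ and $\alpha_{ij}<1$) one has $R_{c_{ij}}>r_{c_{ij}}\geq 0$, and in particular $R_{c_{ij}}>0$. Together with $\|\x_{P_j}-\x_{E_i}\|>0$ away from the trivial terminal configuration, this makes $\V_{ij}^P$ of class $\C^1$ on $\mR_P$.

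Next I would verify the HJI-PDE. Computing the partial gradients of $R_{c_{ij}}$ and $r_{c_{ij}}$ with respect to $\x_{E_i}$ and $\x_{P_j}$ and re-expressing them in terms of $\x_{c_{ij}}/R_{c_{ij}}$ and the unit vector along $\x_{P_j}-\x_{E_i}$, the key step is to establish the two parallelism identities
\[
I^\star-\x_{E_i} = -\alpha_{ij} D\,\nabla_{\x_{E_i}}\V_{ij}^P,\qquad I^\star-\x_{P_j} = (D/\alpha_{ij})\,\nabla_{\x_{P_j}}\V_{ij}^P,
\]
where $D=\|\x_{P_j}-\x_{E_i}\|$. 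These identities show that the saddle-point controls $\u_{E_i}^\star = -U_i\nabla_{\x_{E_i}}\V_{ij}^P/\|\nabla_{\x_{E_i}}\V_{ij}^P\|$ and $\v_{P_j}^\star = V_j\nabla_{\x_{P_j}}\V_{ij}^P/\|\nabla_{\x_{P_j}}\V_{ij}^P\|$ arising from the inner min-max in \eqref{eq:hji} both steer toward $I^\star$, matching the geometric intuition. Substituting into \eqref{eq:hji} reduces the PDE to the scalar balance $V_j\|\nabla_{\x_{P_j}}\V_{ij}^P\| = U_i\|\nabla_{\x_{E_i}}\V_{ij}^P\|$, which follows from the parallelism identities combined with the defining property $\|I^\star-\x_{E_i}\|/U_i = \|I^\star-\x_{P_j}\|/V_j$ of the Apollonius sphere and the relation $\alpha_{ij}=U_i/V_j$.

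Finally, I would close the argument by checking the terminal condition and invoking verification. Along the state feedback strategies $(\u_{E_i}^\star,\v_{P_j}^\star)$ the trajectory is a straight line terminating at $\x_{E_f}=\x_{P_f}=I^\star\in\T_P$, so the terminal cost $\|\x_{E_f}\|$ in \eqref{eq:multi_Pcost} equals $\|I^\star\|=\V_{ij}^P(\x_f)$. Integrating $\tfrac{d}{dt}\V_{ij}^P(\x(t))$ along any admissible deviation and using the HJI identity then sandwiches the payoff above and below by $\V_{ij}^P(\x_0)$, certifying it as the Value and $(\u_{E_i}^\star,\v_{P_j}^\star)$ as a saddle point. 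The main obstacle I anticipate is the bookkeeping required to produce the two parallelism identities in a clean form; once those are in hand, the HJI balance and the verification step are essentially routine.
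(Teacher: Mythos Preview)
Your proposal is correct and follows essentially the same route as the paper: both derive the candidate $R_{c_{ij}}-r_{c_{ij}}$ from the point of the Apollonius sphere closest to the origin and then verify the HJI-PDE by computing the gradients with respect to $\x_{E_i}$ and $\x_{P_j}$ and checking the scalar balance $-\alpha_{ij}\rho_{E_i}+\rho_{P_j}=0$. Your parallelism identities are a cleaner repackaging of the paper's brute-force verification that $\alpha_{ij}^2\rho_{E_i}^2=\rho_{P_j}^2$, and your explicit regularity check (via $R_{c_{ij}}^2-r_{c_{ij}}^2=\B_{ij}/(1-\alpha_{ij}^2)$) together with the terminal-condition and verification-theorem steps supply rigor that the paper leaves implicit.
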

	\begin{proof}
		Since $\alpha_{ij}<1$, the evader is located inside the Apollonius sphere, and since  $\x\in\mR_P$, the target must lie outside the sphere. As both the players can reach any point on the Apollonius sphere in the same time, the evader must choose to head to the point on the sphere that minimizes its distance from the origin (target), considering the cost specified by \eqref{eq:multi_Pcost}. The coordinates of this point are determined as the solution of the following  equality constrained optimization problem  
		\begin{equation}
			\begin{aligned}
				\min\ &x^2+y^2+z^2,\\ \label{eq:opt_sphere}
				\text{subject to } (x-x_{c_{ij}}&)^2+(y-y_{c_{ij}})^2+(z-z_{c_{ij}})^2=r_{c_{ij}}^2.
			\end{aligned}
		\end{equation}
		Taking the Lagrange multiplier associated with the equality constraint as $\delta$, the first order necessary condition is obtained as  $
		x+\delta(x-x_{c_{ij}})=0$, $
		y+\delta(y-y_{c_{ij}})=0$, and $ 
		z+\delta(z-z_{c_{ij}})=0$.
		The desired optimal point is   obtained as $I=(x^*,y^*,z^*)=\frac{\delta}{1+\delta}(x_{c_{ij}},y_{c_{ij}},z_{c_{ij}})$. Since $I$ must lie on  the Apollonius sphere, it satisfies a quadratic equation given by $(x_{c_{ij}}^2+y_{c_{ij}}^2+z_{c_{ij}}^2)\left(1-\frac{\delta}{1+\delta}\right)^2=r_{c_{ij}}^2$. Solving for $\delta$, we get $1-\tfrac{\delta}{1+\delta}=1\mp\tfrac{r_{c_{ij}}}{R_{c_{ij}}}$. 
		One solution corresponds to the point on the sphere farthest from the origin, while the other solution provides the 
		interception point, given by:
		\begin{equation}
			I=(x^*,y^*,z^*)=\left( 1-\tfrac{r_{c_{ij}}}{R_{c_{ij}}}\right) (x_{c_{ij}},y_{c_{ij}},z_{c_{ij}}).
		\end{equation}
		The payoff of the game when the evader and the pursuer choose to head straight towards the interception point (as a result of Lemma \ref{theorem:prelim}) yields a guess of the Value function as $\V_{ij}^P(\x)=||I||=R_{c_{ij}}-r_{c_{ij}}$. 
		
		Next, we verify if the candidate Value function satisfies the HJI-PDE  \eqref{eq:hji}, written as
		\begin{multline}
			\min\limits_{\u_i}\max\limits_{\v_j}\left(\cufrac{\V_{ij}^P}{x_{E_i}} u_{x_i}+\cufrac{\V_{ij}^P}{y_{E_i}}u_{y_i}+\cufrac{\V_{ij}^P}{z_{E_i}}u_{z_i}\right. \\
			\left. +\cufrac{\V_{ij}^P}{x_{P_j}}v_{x_j}+\cufrac{\V_{ij}^P}{y_{P_j}}v_{y_j}+\cufrac{\V_{ij}^P}{z_{P_j}}v_{z_j}\right)=0. \label{eq:ME1}
		\end{multline}
		Then, the optimal controls are obtained in terms of the state and the Value function given by
		\begin{equation}
			\begin{aligned}
				\u_i^*&=-\tfrac{U_i}{\rho_{E_i}}\begin{bmatrix}
					\cufrac{\V_{ij}^P}{x_{E_i}}&\cufrac{\V_{ij}^P}{y_{E_i}}&\cufrac{\V_{ij}^P}{z_{E_i}}
				\end{bmatrix},\\  \v_j^*&=\tfrac{V_j}{\rho_{P_j}}\begin{bmatrix}
					\cufrac{\V_{ij}^P}{x_{P_j}}&\cufrac{\V_{ij}^P}{y_{P_j}}& \cufrac{\V_{ij}^P}{z_{P_j}}
				\end{bmatrix},\label{eq:opt_con}
			\end{aligned}
		\end{equation}
		where $\rho_{E_i}=\sqrt{\left( \cufrac{\V_{ij}^P}{x_{E_i}}\right) ^2+\left( \cufrac{\V_{ij}^P}{y_{E_i}}\right) ^2+\left( \cufrac{\V_{ij}^P}{z_{E_i}}\right) ^2}$ and $\rho_{P_j}=\sqrt{\left( \cufrac{\V_{ij}^P}{x_{P_j}}\right) ^2+\left( \cufrac{\V_{ij}^P}{y_{P_j}}\right) ^2+\left( \cufrac{\V_{ij}^P}{z_{P_j}}\right) ^2}$. Substituting the optimal controls \eqref{eq:opt_con} in the HJI equation \eqref{eq:ME1}, we get
		\begin{equation}
			-\alpha_{ij}\rho_{E_i}+\rho_{P_j}=0. \label{eq:ME2}
		\end{equation}
		To verify if the proposed Value function \eqref{eq:value_Pout}
		indeed satisfies \eqref{eq:ME2}, we first determine the partial derivatives of $\V_{ij}^P$ as
		\begin{equation}
			\begin{aligned}
				\begin{bmatrix}\cufrac{\V_{ij}^P}{x_{E_i}}&\cufrac{\V_{ij}^P}{y_{E_i}}&\cufrac{\V_{ij}^P}{z_{E_i}}\end{bmatrix}&=\tfrac{1}{1-\alpha_{ij}^2}\left(\tfrac{\x_{c_{ij}}}{R_{c_{ij}}} -\tfrac{\alpha_{ij}^2}{1-\alpha_{ij}^2}\tfrac{\x_{E_i}-\x_{P_j}}{r_{c_{ij}}}\right), \\
				\begin{bmatrix}\cufrac{\V_{ij}^P}{x_{P_j}}&\cufrac{\V_{ij}^P}{y_{P_j}}&\cufrac{\V_{ij}^P}{z_{P_j}}\end{bmatrix}&=\tfrac{\alpha_{ij}^2}{1-\alpha_{ij}^2}\left(-\tfrac{\x_{c_{ij}}}{R_{c_{ij}}} +\tfrac{1}{1-\alpha_{ij}^2}\tfrac{\x_{E_i}-\x_{P_j}}{r_{c_{ij}}}\right).
			\end{aligned}
			\label{eq:gradV_compact}
		\end{equation} 
		Using \eqref{eq:gradV_compact}, we get
		\begin{align*} 
		 \alpha_{ij}^2\rho_{E_i}^2&=\alpha_{ij}^2\left(\left( \cufrac{\V_{ij}^P}{x_{E_i}}\right) ^2+\left( \cufrac{\V_{ij}^P}{y_{E_i}}\right) ^2+\left( \cufrac{\V_{ij}^P}{z_{E_i}}\right) ^2\right)\\ &=\tfrac{\alpha_{ij}^2}{(1-\alpha_{ij}^2)^2}\left[1-\tfrac{2\alpha_{ij}^2}{1-\alpha_{ij}^2}\tfrac{\langle \x_{c_{ij}},\x_{EP} \rangle}{R_{c_{ij}}r_{c_{ij}}}+\alpha_{ij}^2 \right],
		 \end{align*} and  
	 \begin{align*} 
		\rho_{P_j}^2&=\left( \cufrac{\V_{ij}^P}{x_{P_j}}\right) ^2+\left( \cufrac{\V_{ij}^P}{y_{P_j}}\right) ^2+\left( \cufrac{\V_{ij}^P}{z_{P_j}}\right) ^2\\ &=\tfrac{\alpha_{ij}^4}{(1-\alpha_{ij}^2)^2}\left[1-\tfrac{2}{1-\alpha_{ij}^2}\tfrac{\langle \x_{c_{ij}},\x_{EP} \rangle}{R_{c_{ij}}r_{c_{ij}}}+\tfrac{1}{\alpha_{ij}^2} \right] 
		= \alpha_{ij}^2\rho_{E_i}^2.
		\end{align*}
		Hence, the function defined by \eqref{eq:value_Pout} satisfies the HJI-PDE \eqref{eq:ME2}, and is therefore the Value of the \texttt{1v1} RADG for $\x\in\mR_P$ with $\alpha_{ij}<1$. The optimal strategies of the players are obtained in feedback form by substituting the Value function in \eqref{eq:opt_con}.
	\end{proof}
	\begin{remark}
		The work by Garcia \cite{garcia_2020} considers a special case of the \texttt{1v1} RADG with $\alpha_{ij}=1$. In Theorem \ref{theorem:1v1_varspeed}, our work generalizes this result by considering all scenarios with $\alpha_{ij}<1$. This extension is non-trivial as the game geometry separating the dominance regions of the pursuer and the evader by is an Apollonius sphere instead of a plane as in the earlier work.
		\label{rem:alpha=1}
	\end{remark}
	\begin{theorem}[Evader winning region] Consider the \texttt{1v1} RADG with $\x\in\mR_E$ and $\alpha_{ij}\leq1$. The Value function is $\C^1$ and it is the solution of the HJI-PDE \eqref{eq:hji}. The Value function is given by 
		\begin{equation}
			\V_{ij}^E(\x)=-R_{P_{j}}+\frac{R_{E_{i}}}{\alpha_{ij}}, \label{eq:valueE}
		\end{equation}
		where $R_{P_{j}}=\vert\vert \x_{P_j} \vert\vert$, and $R_{E_{i}}=\vert\vert \x_{E_i} \vert\vert$.
		\label{theorem:1v1_varspeed_E} 
	\end{theorem}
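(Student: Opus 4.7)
The plan is to mirror the proof of Theorem~\ref{theorem:1v1_varspeed}: propose natural saddle-point strategies, compute the payoff they induce to obtain a candidate Value function, and verify that this candidate satisfies the HJI-PDE \eqref{eq:hji}.

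First, by Lemma~\ref{theorem:prelim} we may restrict attention to straight-line motion. In $\mR_E$, the barrier condition $R_{E_i}^2 \le \alpha_{ij}^2 R_{P_j}^2$ is equivalent to $R_{E_i}/U_i \le R_{P_j}/V_j$, so the evader can reach the target at least as fast as the pursuer. In the \texttt{1v1} setting the cost \eqref{eq:multi_Ecost} reduces to $J = -\lVert \x_{P_{j_f}} \rVert$: the pursuer (maximizer) wants to be close to the target at termination, and the evader (minimizer) wants the pursuer to be far from it. The natural candidate strategies are for both players to race straight to the target. Then $t_f = R_{E_i}/U_i$, and the pursuer's terminal distance is $R_{P_j} - V_j t_f = R_{P_j} - R_{E_i}/\alpha_{ij} \ge 0$, producing the candidate $\V_{ij}^E(\x) = -R_{P_j} + R_{E_i}/\alpha_{ij}$.

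Second, I would verify that this candidate satisfies the HJI-PDE using the reduction \eqref{eq:ME2}. The partial gradients are $\nabla_{\x_{E_i}} \V_{ij}^E = (1/\alpha_{ij})\,\x_{E_i}/R_{E_i}$ and $\nabla_{\x_{P_j}} \V_{ij}^E = -\x_{P_j}/R_{P_j}$, so $\rho_{E_i} = 1/\alpha_{ij}$ and $\rho_{P_j} = 1$. The identity $-\alpha_{ij}\rho_{E_i} + \rho_{P_j} = 0$ then holds trivially. Substituting these gradients into the feedback formula \eqref{eq:opt_con} recovers $\u_i^\ast = -U_i\,\x_{E_i}/R_{E_i}$ and $\v_j^\ast = -V_j\,\x_{P_j}/R_{P_j}$, both headings aiming straight at the target, which is consistent with the candidate strategies.

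The main subtlety I expect is not computational but in treating the boundary case $\alpha_{ij}=1$, where the Apollonius sphere underpinning Theorem~\ref{theorem:1v1_varspeed} degenerates to a plane; the present argument never invokes that geometry and so handles $\alpha_{ij}<1$ and $\alpha_{ij}=1$ uniformly. One should additionally verify that $\V_{ij}^E$ is $\C^1$ on the interior of $\mR_E$ (both $R_{E_i}$ and $R_{P_j}$ are strictly positive away from the terminal manifold), and confirm the boundary behavior on $\{R_{E_i}=0\}$ so that the HJI verification, combined with Lemma~\ref{theorem:prelim}, identifies \eqref{eq:valueE} as the Value of the \texttt{1v1} RADG in the evader winning region.
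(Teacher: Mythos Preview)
Your proposal is correct and follows essentially the same approach as the paper: propose the ``both head straight to the origin'' strategies, compute the resulting payoff to obtain the candidate $\V_{ij}^E$, and then verify the HJI-PDE via the reduced identity $-\alpha_{ij}\rho_{E_i}+\rho_{P_j}=0$ using the gradients $\nabla_{\x_{E_i}}\V_{ij}^E=(1/\alpha_{ij})\x_{E_i}/R_{E_i}$ and $\nabla_{\x_{P_j}}\V_{ij}^E=-\x_{P_j}/R_{P_j}$. Your additional remarks on $\alpha_{ij}=1$ and $\C^1$-smoothness go slightly beyond what the paper spells out, but the core argument is identical.
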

	\begin{proof}
		Since $\x\in\mR_E$, the evader can reach the target faster than the pursuer. Thus, it might seem optimal to the evader to head directly towards the target in accordance to the result in Lemma \ref{theorem:prelim}. In this scenario, the time taken by the evader to reach origin is given by $t_{E_i}=\tfrac{\sqrt{x_{E_i}^2+y_{E_i}^2+z_{E_i}^2}}{U_i}=\tfrac{R_{E_i}}{U_i}$. During this time, $P_j$ covers a distance of $V_jt_{E_i}$ along the direction towards the terminal location of the evader, the origin. Hence, the terminal location of $P_j$ is given by 	$\x_{P_{j_f}}=\x_{P_j}-V_jt_{E_i}\tfrac{\x_{P_j}}{R_{P_j}}$.
		Then, the distance $||\x_{P_{j_f}}||$ gives the payoff, considering the objective in the evader winning region \eqref{eq:multi_Ecost}. This payoff now forms a guess for the Value function given by $\V_{ij}^E(\x)=-||\x_{P_{j_f}}||$ which equals the expression proposed in \eqref{eq:valueE}. The negative sign arises due to convention as the pursuer is considered to be the maximizing player in this article. 
		
		It remains to be checked if the proposed Value function \eqref{eq:valueE} satisfies the HJI-PDE which can be written as
		\begin{multline}
			\min\limits_{\u_i}\max\limits_{\v_j}\left(\cufrac{\V_{ij}^E}{x_{E_i}} u_{x_i}+\cufrac{\V_{ij}^E}{y_{E_i}}u_{y_i}+\cufrac{\V_{ij}^E}{z_{E_i}}u_{z_i}\right. \\
			\left. +\cufrac{\V_{ij}^E}{x_{P_j}}v_{x_j}+\cufrac{\V_{ij}^E}{y_{P_j}}v_{y_j}+\cufrac{\V_{ij}^E}{z_{P_j}}v_{z_j}\right)=0. \label{eq:ME1E}
		\end{multline}
		Thus, the optimal controls in terms of the states and the Value function can be written as 
		\begin{equation}
			\begin{aligned}
				\u_i^*&=-\tfrac{U_i}{\rho_{E_i}}\begin{bmatrix}
					\cufrac{\V_{ij}^E}{x_{E_i}}&\cufrac{\V_{ij}^E}{y_{E_i}}&\cufrac{\V_{ij}^E}{z_{E_i}}
				\end{bmatrix},\\  \v_j^*&=\tfrac{V_j}{\rho_{P_j}}\begin{bmatrix}
					\cufrac{\V_{ij}^E}{x_{P_j}}&\cufrac{\V_{ij}^E}{y_{P_j}}& \cufrac{\V_{ij}^E}{z_{P_j}}
				\end{bmatrix},\label{eq:opt_conE}
			\end{aligned}
		\end{equation}
		where $\rho_{E_i}=\sqrt{\left( \cufrac{\V_{ij}^E}{x_{E_i}}\right) ^2+\left( \cufrac{\V_{ij}^E}{y_{E_i}}\right) ^2+\left( \cufrac{\V_{ij}^E}{z_{E_i}}\right) ^2}$ and $\rho_{P_j}=\sqrt{\left( \cufrac{\V_{ij}^E}{x_{P_j}}\right) ^2+\left( \cufrac{\V_{ij}^E}{y_{P_j}}\right) ^2+\left( \cufrac{\V_{ij}^E}{z_{P_j}}\right) ^2}$. Substituting the optimal controls \eqref{eq:opt_conE} in the  HJI-PDE \eqref{eq:ME1E}, we get
		
		\begin{equation}
			-\alpha_{ij}\rho_{E_i}+\rho_{P_j}=0. \label{eq:ME2E}
		\end{equation}
		To verify if the proposed Value function \eqref{eq:valueE} satisfies the HJI-PDE \eqref{eq:ME2E}, we first determine the partial derivatives as		
		\begin{equation}
			\begin{aligned}
				\begin{bmatrix}\cufrac{\V_{ij}^E}{x_{E_i}}&\cufrac{\V_{ij}^E}{y_{E_i}}&\cufrac{\V_{ij}^E}{z_{E_i}}\end{bmatrix}&=\tfrac{1}{\alpha_{ij}}\tfrac{\x_{E_i}}{R_{E_i}},\\
				\begin{bmatrix}\cufrac{\V_{ij}^E}{x_{P_j}}&\cufrac{\V_{ij}^E}{y_{P_j}}&\cufrac{\V_{ij}^E}{z_{P_j}}\end{bmatrix}&=-\tfrac{\x_{P_j}}{R_{P_j}}. \label{eq:gradVE}
			\end{aligned}
		\end{equation}
		Using \eqref{eq:gradVE}, the LHS of the HJI-PDE \eqref{eq:ME2E} is determined as 		$-\alpha_{ij}\sqrt{\tfrac{1}{\alpha_{ij}^2}\left[\left(\tfrac{x_{E_i}}{R_{E_i}} \right)^2+\left( \tfrac{y_{E_i}}{R_{E_i}}\right)^2+\left( \tfrac{z_{E_i}}{R_{E_i}}\right)^2 \right] } +\sqrt{\left(\tfrac{x_{P_j}}{R_{P_j}} \right)^2+\left( \tfrac{y_{P_j}}{R_{P_j}}\right)^2+\left( \tfrac{z_{P_j}}{R_{P_j}}\right)^2}=-\alpha_{ij}\tfrac{1}{\alpha_{ij}}+1=0.$
		The function defined by \eqref{eq:valueE} satisfies the HJI-PDE and is, therefore, the Value function of the differential game. The optimal strategies of the players in feedback form are obtained by substituting the Value function in \eqref{eq:opt_conE}.
	\end{proof}
	
	\begin{remark}
		We note that Theorem \ref{theorem:1v1_varspeed_E} provides
		the Value function in the evader winning region. In the existing literature GoD analysis 
		is mostly performed exclusively in the pursuer winning region; see for example \cite{pachter_2018,yan_guarding_2022,garcia_2020,garcia_2021}. Here, we provide a comprehensive solution to the MRADG in 3D space by obtaining solutions to GoDs in both the pursuer and evader winning regions.
	\end{remark}
	
	\begin{figure}[h]  \centering 
		\subfloat[]{
			\includegraphics[width=0.15\textwidth,valign=t]{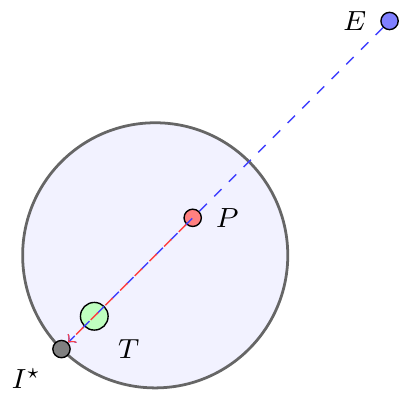}
			\label{fig:Rp_pathology}}\qquad
		\subfloat[]{
			\includegraphics[width=0.15\textwidth,valign=t]{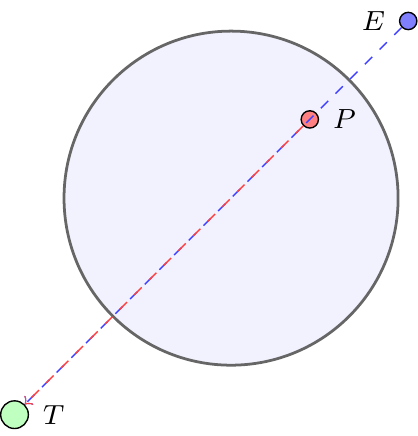}
			\label{fig:Re_pathology} }
		\caption{Pathological cases}
		\label{fig:pathological} 
	\end{figure}
	\begin{remark}
		The analytical characterization of the Value function for the \texttt{1v1} RADG has been provided in both the pursuer and evader winning regions only for $\alpha_{ij}\leq1$ (Theorem \ref{theorem:1v1_varspeed}, Remark \ref{rem:alpha=1} and Theorem \ref{theorem:1v1_varspeed_E}). However, the methodology proposed here cannot be extended to a situation where $\alpha_{ij}>1$.
 		To see this, consider a situation as shown in Figure \ref{fig:Rp_pathology}. Since the target lies within the Apollonius sphere along with the pursuer, this situation clearly corresponds to the pursuer winning. Following the established approach outlined in Theorem \ref{theorem:1v1_varspeed}, if both the agents head towards the optimal interception point $I^\star$, then they are bound to cross the target before reaching $I^\star$, which is clearly non-optimal. Now, consider a similar situation in the evader winning region as shown in Figure \ref{fig:Re_pathology}. If the solution approach presented in Theorem \ref{theorem:1v1_varspeed_E} is adopted, both the agents head directly towards the target, resulting in an interception on the boundary of the Apollonius sphere well before the evader can reach the target. Again, this behaviour is clearly non-optimal for the evader as it has the advantage of superior speed. Also, note that the illustrations are projections on the 2D space for clarity. 
	\end{remark}

	\section{Multiplayer Differential Game}
	\label{sec:multiplayer}
	In this section, we analyze a class of MRADGs using the results obtained from the previous section. Specifically, we solve the co-design problem by providing an optimal assignment scheme and an analytical characterization of the player winning regions and the Value function. 
	The class of MRADGs considered in this paper is characterized by the following assumption on players' interactions.
	\begin{assumption}
				\begin{enumerate}[label=(\roman*)]
			\item \label{itm:assumitem1}  A pursuer can capture at most one evader, and an evader is pursued by one pursuer.
			\item \label{itm:assumitem2} The pursuing team is atleast as large as the evading team.
			\item \label{itm:assumitem3} The pursuers commit to their assignments throughout the duration of the game.
		\end{enumerate}
	\label{assum:assumption1}
	\end{assumption}
	Item \ref{itm:assumitem1} is crucial, and using this, we show that there exists a linear-programming based optimal assignment scheme for matching the pursuers with the evaders. Item \ref{itm:assumitem2} is a natural choice following Item \ref{itm:assumitem1}, because if $n<m$ then, by Item \ref{itm:assumitem1}, at least $m-n$ evaders cannot be assigned to a pursuer, thus making the game outcome trivial.
		Item \ref{itm:assumitem3} implies that the pursuers strictly commit to their assignments throughout the game, \textit{i.e.}, $\mu_{ij}(t)=\mu_{ij}(0)\ \forall t\in[0,t_f]$. 
	Note that we do not make the assumption that all pursuers are faster than all evaders, as is often assumed in the existing literature; see \cite{yan_guarding_2022,pachter_2019,moll_2022,garcia_2020,garcia_2021}.
	\par	
	Next, we define a set of notations to aid in further analysis. First, the set of all \emph{feasible} assignments is denoted by
	\begin{align}
		\Sigma:=\big\{\bm{\mu}\in\{0,1\}^{m\times n}~\big|~
		\bm\mu\bm 1_n=\bm1_m,~ \bm\mu^T\bm 1_m\leq \bm1_n \big\}, \label{eq:feasibleset}
	\end{align}
	where $\bm1_k$ denotes a column one vector of size $k$.  
	
	We denote $\mu_{ij}$ as the element in $i^{th}$ row and $j^{th}$ column of any $\bm\mu\in\R^{3(m+n)}$. For every $\bm\mu\in\Sigma$, we define the associated index set as
	\begin{equation}
		A_{\bm\mu}:=\big\{ ij\in M\times N~\big|~\mu_{ij}=1 \big\}.
	\end{equation}
	It is worth mentioning that for every feasible assignment $\bm\mu\in\Sigma$, there is a unique associated index set. Further, any two distinct feasible assignments result in distinct index sets. 
	
	Next, we denote the set of all \emph{probabilistic feasible} assignments by
	\begin{align}
		\Gamma:=\big\{\bm{\gamma}\in[0,1]^{m\times n}~\big|~\bm\gamma\bm 1_n=\bm1_m,~ \bm\gamma^T\bm 1_m\leq \bm1_n \big\}. \label{eq:feasiblepset}
	\end{align}
	
	Denote $\bm\gamma_{ij}$ as the element in $i^{th}$ row and $j^{th}$ column of $\bm\gamma$. The payoff received by the pursuer $P_j$ when matched with an evader $E_i$ is now defined as
	\begin{align}
		a_{ij}(\x_{E_i},\x_{P_j})= \begin{cases} \mathcal V_{ij}^P(\x_{E_i},\x_{P_j});&  B_{ij}(\x_{E_i},\x_{P_j})> 0,~ \alpha_{ij}\leq1  \\ -L;&  \text{otherwise}. \end{cases} \label{eq:aval}
	\end{align}
	The pursuer $P_j$ assigned to evader $E_i$ receives a payoff equal to the Value of the game obtained from \eqref{eq:value_Pout}, if $\alpha_{ij}<1$, and $(\x_{E_i},\x_{P_j})$ lies in the pursuer winning region of the induced \texttt{1v1} RADG. For the $\alpha_{ij}=1$ case, the payoff follows from \cite{garcia_2020}. Notice that, if $\B_{ij}(\x)>0$, then the corresponding $\V_{ij}^P(\x)$ is strictly positive. In any other case, the pursuer incurs a cost equal to $L>0$. 
	
	Based on this payoff, the index set for some feasible assignment $\bm\mu\in\Sigma$ can be partitioned as follows:
	\begin{equation}
		A_{\bm\mu}^W:=\{ij\in A_{\bm\mu}~|~a_{ij}(\x_{E_i},\x_{P_j})>0 \} \text{ and } A_{\bm\mu}^L:=A_{\bm\mu}\setminus A_{\bm\mu}^W. \label{eq:ass_decompose}
	\end{equation}
	This partition distinguishes between the pairings where the pursuer wins and the ones where the pursuer loses.
	\par
	Now, using the payoff defined in \eqref{eq:aval}, the contribution of capturing an evader $E_i$ to the pursuer team is expressed as 
	\begin{align}
		\Psi_i((\x_{E_i},\x_P),\bm{\mu}):=\sum_{j\in N} a_{ij}(\x_{E_i},\x_{P_j})~\mu_{ij}.
		\label{eq:payoffcontribution}
	\end{align}
	Note that, if $\bm\mu\in\Sigma$ as defined in \eqref{eq:feasibleset}, then only one pursuer $P_j$ can be matched to an evader $E_i$. Consequently, the contribution of capturing $E_i$  is $\Psi_i((\x_{E_i},\x_{P_j}),\bm\mu)=a_{ij}(\x_{E_i},\x_{P_j})$.
	Now, the team payoff of the pursuers under the assignment $\bm\mu$ is given by 
	\begin{equation}
		\Psi(\x,\bm\mu):=\sum\limits_{i\in M}\Psi_i\left((\x_{E_i},\x_{P}),\bm\mu\right). \label{eq:assignment_payoff}
	\end{equation}
	
	We denote the set of pursuers who are faster than some evader $E_i$ and can capture this evader as $\hat N_i:=\{j\in N:\B_{ij}(\x_{E_i},\x_{P_j})>0,\ \alpha_{ij}\leq1\}$. Among the pursuers in $\hat N_i$, the maximum payoff generated by a pursuer who can capture $E_i$ is obtained as $\max_{j\in \hat{N}_i} \mathcal V_{ij}^P(\x_{E_i},\x_{P_j})$. Using this, the best possible payoff the pursuer team could generate during their engagement with the evader team is then obtained as
	\begin{align}
		L^\star(\x):=\sum_{i\in M}~ \max_{j\in \hat N_i} \mathcal V_{ij}^P(\x_{E_i},\x_{P_j}). \label{eq:lowerboundcost}
	\end{align}
	\begin{remark} 
		We note that $L^\star(\x)$   represents the maximum payoff achievable by the pursuer team when each evader is assigned to the pursuer that can yield the highest possible payoff. However, such an assignment need not be feasible.
	\end{remark} 
	
	%
	\subsection{Game of Kind}
	\label{subsec:gok}
	In this subsection, we characterize the winning regions for the pursuer and evader teams in the MRADG, based on the analysis in Section \ref{sec:1v1}. To this end, in the next theorem, we provide a solution to the optimal matching problem of pursuers to evaders in the MRADG by modeling it as a form of Shapley-Shubik assignment game \cite{shapley_1971}.
	
	\begin{theorem}
		The linear programming problem defined by
		\begin{equation}
			\Gamma^\star(\x):=\argmax\limits_{\bm\gamma\in\Gamma} \Psi(\x,\bm\gamma),\quad \x\in\R^{3(m+n)}
			\label{eq:lpsolution}
		\end{equation}
		satisfies $\Gamma^\star(\x)\subseteq\Sigma$, i.e., every assignment $\bm\gamma^\star\in\Gamma^\star(\x)$ is a feasible assignment. Starting from some $\x\in\R^{3(m+n)}$, if $L>L^\star(\x)$ then every optimal assignment $\bm\gamma^\star\in\Gamma^\star(\x)$ ensures that the least number of evaders reach the target. 
		\label{thm:lp}
	\end{theorem}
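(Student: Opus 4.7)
The plan is to prove the two claims in sequence, using the structure of bipartite matching polytopes for the first and a direct payoff-comparison argument for the second. For the first claim that $\Gamma^\star(\x) \subseteq \Sigma$, I would observe that the feasible polytope $\Gamma$ in \eqref{eq:feasiblepset} is a standard bipartite assignment polytope: its defining constraint matrix (rows for the evader equalities $\bm\gamma\bm 1_n = \bm 1_m$, rows for the pursuer inequalities $\bm\gamma^T\bm 1_m \leq \bm 1_n$, together with the box constraints $0 \leq \gamma_{ij} \leq 1$) is totally unimodular, being the incidence matrix of a bipartite graph augmented with identity rows. Hence every vertex of $\Gamma$ has entries in $\{0,1\}$, and the vertex set of $\Gamma$ coincides exactly with $\Sigma$. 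Since $\Psi(\x, \cdot)$ is linear in $\bm\gamma$, the LP in \eqref{eq:lpsolution} attains its optimum on a face of $\Gamma$ whose extreme points lie in $\Sigma$; restricting $\argmax$ to basic optimal solutions (the natural interpretation for a combinatorial assignment game) then gives $\Gamma^\star(\x) \subseteq \Sigma$.

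For the second claim, I would work with the decomposition from \eqref{eq:ass_decompose}. For any $\bm\mu \in \Sigma$, let $k(\bm\mu) := |A_{\bm\mu}^L|$ denote the number of pairings where the assigned pursuer cannot capture its evader (either $B_{ij}(\x) \leq 0$ or $\alpha_{ij}>1$). By the 1v1 analysis in Section \ref{sec:1v1} together with the commitment assumption (Assumption \ref{assum:assumption1}\ref{itm:assumitem3}), each such loss pairing lets $E_i$ reach the target under optimal play, so $k(\bm\mu)$ equals the number of evaders that successfully reach the origin. Using \eqref{eq:aval}, the team payoff then splits as
\[
\Psi(\x,\bm\mu) = \sum_{(i,j) \in A_{\bm\mu}^W} a_{ij}(\x_{E_i},\x_{P_j}) \;-\; k(\bm\mu)\, L,
\]
and the first sum is bounded above by $L^\star(\x)$ from \eqref{eq:lowerboundcost}, since $L^\star(\x)$ is the maximum achievable aggregate capture payoff (feasibility of the achieving assignment aside).

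I would then close by contradiction: suppose an optimal $\bm\mu^\star \in \Gamma^\star(\x)$ satisfies $k(\bm\mu^\star) > k(\tilde{\bm\mu})$ for some competing $\tilde{\bm\mu} \in \Sigma$. Subtracting the two payoff expressions and using the upper bound $L^\star(\x)$ on the win-sum of $\bm\mu^\star$ and the trivial lower bound $0$ on the win-sum of $\tilde{\bm\mu}$ yields
\[
\Psi(\x,\bm\mu^\star) - \Psi(\x,\tilde{\bm\mu}) \;\leq\; L^\star(\x) - \bigl(k(\bm\mu^\star) - k(\tilde{\bm\mu})\bigr) L \;\leq\; L^\star(\x) - L \;<\; 0,
\]
by the hypothesis $L > L^\star(\x)$. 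This contradicts optimality of $\bm\mu^\star$, so every optimal assignment minimizes $k(\cdot)$, i.e., the number of evaders reaching the target.

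The main obstacle is the \emph{every} clause in $\Gamma^\star(\x) \subseteq \Sigma$: generically the LP has a unique, integer optimum, but when several integer vertices are co-optimal their convex hull forms an optimal face containing fractional points, so the containment is really about basic/vertex optimizers and this convention needs to be made explicit. A secondary subtlety is the clean identification of a loss pairing $(i,j) \in A_{\bm\mu}^L$ with an evader reaching the target: this requires separately handling the subcases $B_{ij}(\x) \leq 0$ (Lemma \ref{lem:1v1barrier}) and $\alpha_{ij} > 1$ (the pathological case discussed after Theorem \ref{theorem:1v1_varspeed_E}), and invoking pursuer commitment so that $E_i$ is not rescued by any other pursuer mid-game.
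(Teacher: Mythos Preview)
Your proposal is correct and follows essentially the same route as the paper: integrality of the bipartite assignment polytope for the first claim (the paper simply cites the Shapley--Shubik/Dantzig result rather than spelling out total unimodularity), and for the second claim a payoff-difference contradiction that bounds the win-sum by $L^\star(\x)$ and uses $L>L^\star(\x)$ together with the integer gap $k(\bm\mu^\star)-k(\tilde{\bm\mu})\geq 1$. The ``every'' subtlety you flag is real and is handled in the paper exactly as you suggest---by convention, reading $\Gamma^\star(\x)$ as the set of basic (vertex) optimizers.
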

	\begin{proof}
		We note that the  linear program described in \eqref{eq:lpsolution} is a form of Shapley-Shubik assignment game \cite{shapley_1971}, which  has been shown to have integer solutions \cite{dantzig_1963}. Therefore, the optimal solutions to \eqref{eq:lpsolution} are indeed feasible assignments, i.e., $\Gamma^\star(\x)\subseteq\Sigma$, even though the search space encompasses all probabilistic feasible assignments. 
		
		Now, fix some $\G\in\Gamma^\star(\x)$ and assume there exists an assignment $\bm\mu\in\Sigma$ that allows $m_\mu$ evaders to reach the target. If $L>L^\star(\x)$, we claim that the optimal assignment $\bm\gamma^\star$ allows at most $m_\mu$ evaders to reach the target. Assume to the contrary that the optimal assignment allows $m_\G$ evaders to reach the target with $m_\G>m_\mu$. 
		The assignments $\bm\mu$ and $\bm\gamma^\star$ yield payoffs $\Psi(\x,\bm\mu)=\sum\limits_{ij\in A_\mu}a_{ij}(\x_{E_i},\x_{P_j})$ and $\Psi(\x,\bm\gamma^\star)=\sum\limits_{ij\in A_{\bm\gamma^\star}}a_{ij}(\x_{E_i},\x_{P_j})$ for the pursuer team respectively. Now, using the sets in \eqref{eq:ass_decompose}, we can write
		\begin{align*}
			&\Psi(\x,\bm\mu)-\Psi(\x,\bm\gamma^\star)=\sum\limits_{ij\in A_\mu}a_{ij}(\x_{E_i},\x_{P_j})-\sum\limits_{ij\in A_{\bm\gamma^\star}}a_{ij}(\x_{E_i},\x_{P_j})\\
			&=\Big( \sum\limits_{ij\in A_\mu^W}a_{ij}(\x_{E_i},\x_{P_j})+\sum\limits_{ij\in A_\mu^L}a_{ij}(\x_{E_i},\x_{P_j}) \Big) \\
			&\quad -\Big( \sum\limits_{ij\in A_{\bm\gamma^\star}^W}a_{ij}(\x_{E_i},\x_{P_j})+\sum\limits_{ij\in A_{\bm\gamma^\star}^L}a_{ij}(\x_{E_i},\x_{P_j}) \Big) \\
			&=\Big( \sum\limits_{ij\in A_\mu^W}a_{ij}(\x_{E_i},\x_{P_j})-\sum\limits_{ij\in A_{\bm\gamma^\star}^W}a_{ij}(\x_{E_i},\x_{P_j}) \Big)+(m_\G-m_\mu)L.
		\end{align*}
		By the definition of $L^\star(\x)$, we have that $0<\sum\limits_{ij\in A_\mu^W}a_{ij}(\x_{E_i},\x_{P_j})\leq L^\star(\x)\ \forall\bm\mu\in\Sigma$, and since $\bm\gamma^\star\in\Sigma$, we have $\sum\limits_{ij\in A_\mu^W}a_{ij}(\x_{E_i},\x_{P_j})-\sum\limits_{ij\in A_{\bm\gamma^\star}^W}a_{ij}(\x_{E_i},\x_{P_j})\geq -L^\star(\x)$. By assumption $L>L^\star(\x)$ and $m_\G>m_\mu$. This results in $m_\G-m_\mu\geq1$ as $m_\G$ and $m_\mu$ are both integers. Hence, we have that $\Psi(\x,\bm\mu)-\Psi(\x,\bm\gamma^\star)>0$ which is a contradiction to the fact that $\bm\gamma^\star$ optimizes the LP given by \eqref{eq:lpsolution}. Further, since the chosen $\bm\mu\in\Sigma$ is arbitrary, $m_\G\leq m_{\bm\mu},\ \forall\bm\mu\in\Sigma$. And, since the choice of $\G\in\Gamma^\star(\x)$ was arbitrary, any assignment that satisfies the optimization problem \eqref{eq:lpsolution} is  feasible and ensures that the least number of evaders reach the target. 
	\end{proof}

	\begin{corollary}
		Starting from some $\x\in\R^{3(m+n)}$ let $m_\G$ denote the number of evaders reaching the target under the assignment scheme $\G\in\Gamma^\star(\x)$. The number of evaders that reach the target under any assignment scheme maximizing \eqref{eq:lpsolution} is same, i.e.,	$m_{\bm\gamma_1^\star}=m_{\bm\gamma_2^\star},\ \forall \bm\gamma_1^\star,\ \bm\gamma_2^\star\in \Gamma^\star(\x)$.
		\label{cor:evader_num}
	\end{corollary}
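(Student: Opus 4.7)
The plan is to derive Corollary \ref{cor:evader_num} as an immediate symmetric application of Theorem \ref{thm:lp}. The key observation is that Theorem \ref{thm:lp} already establishes $\Gamma^\star(\x)\subseteq\Sigma$, so every optimal assignment is itself feasible and can therefore play the role of the comparison assignment $\bm\mu$ in the minimality inequality supplied by that theorem.

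First, I would fix two arbitrary optimal assignments $\bm\gamma_1^\star,\bm\gamma_2^\star\in\Gamma^\star(\x)$ and denote by $m_{\bm\gamma_1^\star}$ and $m_{\bm\gamma_2^\star}$ the number of evaders reaching the target under each. Second, I would apply the conclusion of Theorem \ref{thm:lp} to $\bm\gamma_1^\star$, taking as the comparison feasible assignment $\bm\mu:=\bm\gamma_2^\star\in\Sigma$; this directly yields $m_{\bm\gamma_1^\star}\leq m_{\bm\gamma_2^\star}$. Third, by swapping the roles of the two optimal assignments and invoking the theorem once more with $\bm\mu:=\bm\gamma_1^\star\in\Sigma$, I would obtain the reverse inequality $m_{\bm\gamma_2^\star}\leq m_{\bm\gamma_1^\star}$. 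Combining the two inequalities gives the asserted equality $m_{\bm\gamma_1^\star}=m_{\bm\gamma_2^\star}$, and since $\bm\gamma_1^\star,\bm\gamma_2^\star$ were arbitrary, the claim follows for all pairs in $\Gamma^\star(\x)$.

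The only point requiring care, and the closest thing to an obstacle, is that the minimality clause of Theorem \ref{thm:lp} rests on the hypothesis $L>L^\star(\x)$; I would inherit this as a standing assumption for the corollary so that the inequality is available for both orderings of the pair. Beyond this bookkeeping, no additional combinatorial or optimization argument is needed: the corollary is essentially a symmetry consequence of the stronger statement already proved in Theorem \ref{thm:lp}.
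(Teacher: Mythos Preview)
Your proposal is correct and follows essentially the same argument as the paper: both proofs use $\Gamma^\star(\x)\subseteq\Sigma$ from Theorem~\ref{thm:lp} to instantiate the minimality inequality with each optimal assignment playing the role of the comparison $\bm\mu$, obtaining the two-sided inequality and hence equality. Your explicit remark that the standing hypothesis $L>L^\star(\x)$ must be inherited is a worthwhile clarification that the paper leaves implicit.
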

	\begin{proof}
		 Consider $\bm\gamma_1^\star, \bm\gamma_2^\star \in \Gamma^\star(\x)$. As already shown in Theorem \ref{thm:lp}, we have $m_{\bm\gamma_i^\star}\leq m_{\bm\mu}\ \forall \bm\mu\in\Sigma$ for $i=1,2$. But since $\Gamma^\star(\x)\subseteq\Sigma$, so both the assignments $\bm\gamma_1^\star, \bm\gamma_2^\star \in\Sigma$. Thus, we have $m_{\bm\gamma_1^\star}\leq m_{\bm\gamma_2^\star}$ and similarly  $m_{\bm\gamma_2^\star}\leq m_{\bm\gamma_1^\star}$. Thus, we conclude that  $m_{\bm\gamma_1^\star}= m_{\bm\gamma_2^\star}$, i.e., all the assignments maximizing \eqref{eq:lpsolution} allow the same number of evaders to reach the target. 
	\end{proof}
	
	
	\begin{remark}
		We note that if the cost of failing to capture an evader  ($-L$)  is low enough, then the optimal assignment may allow more evaders to reach the target, even if there exists another feasible assignment allowing fewer evaders to win. Additionally, at every state $\x\in\R^{3(m+n)}$, the value $L^\star(\x)$ serves as a conservative lower bound for $L$. 
	\end{remark}
	
	\begin{remark}
		\label{rem:weighted}
		A majority of multiplayer differential game literature considers a maximum matching algorithm to extend the solution of a \texttt{1v1} interaction to the multiplayer game. However, a maximum matching algorithm is indifferent between any pair of assignments that result in the capture of the same number of evaders. It is possible however that one of these assignments results in a better payoff as a result of the underlying \texttt{1v1} interactions. To differentiate between such assignments, one must consider a weighted maximum matching algorithm with the weights representing some notion of the payoff received as a result of the underlying \texttt{1v1} interactions. To this end, the assignment problem considered in this work, is fundamentally a combinatorial challenge tasked with identifying the maximum weighted matching for a bipartite graph \cite{Kao_2016}. 
	\end{remark}
	
	\begin{remark}
		\label{rem:efficiency}
		As mentioned in Remark \ref{rem:weighted}, the assignment problem is fundamentally a maximum weighted matching algorithm. However, the assignment problem can also be formulated as a linear program; see \cite{shapley_1971}. While employing the linear program approach may not be the most expedient implementation, in practice it often performs significantly better than a brute-force method where all feasible assignments need to be enumerated, resulting in a factorial complexity. Further, it offers a systematic framework for analyzing multiplayer game situations. In particular, it aids in further analysis to determine the analytical solution to the GoK and the GoD. And, since the solutions to the assignment problem remain consistent regardless of the chosen solution approach, even faster algorithms \cite{Bijsterbosch_Volgenant_2010} can be utilized for obtaining these solutions.
	\end{remark}

	Next, using the assignment scheme obtained via the optimization problem \eqref{eq:lpsolution}, we device the analytical characterization of the pursuer and evader winning regions.

	\begin{definition}
		Consider the family of barrier functions $\B_\G:\R^{3(m+n)}\rightarrow\R$ defined by 
		\begin{equation}
			\B_\G(\x)=\min\limits_{i\in M}\Psi_i((\x_{E_i},\x_{P}),\bm{\gamma}^\star), \label{eq:nm_barrier}
		\end{equation}
		for every $\G\in\Gamma^\star(\x)$. Note, that $\B_\G$ is well-defined over its domain.
	\end{definition}
	
	\begin{remark}
		Note that since $\G\in\Gamma^\star(\x)$, the assignment $\G$ is also a function of $\x$. However, for the sake of brevity, this dependence is not explicitly shown (i.e., ideally it should have been $\G(\x)$). So, the term $\B_\G(\x)$ implies that the assignment $\G$ is an optimal assignment satisfying \eqref{eq:lpsolution} for the state $\x$. 
		
	\end{remark}
	
	\begin{lemma} Let Assumption \ref{assum:assumption1} hold. Consider an $\x\in\R^{3(m+n)}$ and some $\G\in\Gamma^\star(\x)$. Starting from $\x$, under the assignment scheme $\G$, the pursuer team wins if $\B_\G(\x)>0$ and the evader team wins if $\B_\G(\x)\leq0$. 
		\label{lemma:win}
	\end{lemma}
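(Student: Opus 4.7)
The plan is to leverage Assumption~\ref{assum:assumption1} to reduce the multiplayer game under a fixed matching $\G\in\Gamma^\star(\x)$ to $m$ parallel \texttt{1v1} RADG subgames, then classify each subgame's outcome via Lemma~\ref{lem:1v1barrier}. By Theorem~\ref{thm:lp}, $\G\in\Sigma$ is a feasible assignment, so every evader $E_i$ is matched to a unique pursuer $P_{j_i}$ with $\gamma^\star_{ij_i}=1$. Items~\ref{itm:assumitem1} and~\ref{itm:assumitem3} of Assumption~\ref{assum:assumption1} guarantee that these $m$ engagements proceed independently throughout the game: once committed, no pursuer may switch targets or assist in another engagement, and each pursuer is only responsible for (at most) its single assigned evader. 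Consequently, because $\G$ places exactly one nonzero entry in each row, the contribution in \eqref{eq:payoffcontribution} collapses to
\[
\Psi_i((\x_{E_i},\x_P),\G)=a_{ij_i}(\x_{E_i},\x_{P_{j_i}}),
\]
and the multiplayer outcome is simply the conjunction of the $m$ induced \texttt{1v1} outcomes.

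Next, I would combine the definition of $a_{ij}$ in~\eqref{eq:aval} with Lemma~\ref{lem:1v1barrier}: $a_{ij_i}=\V_{ij_i}^P>0$ precisely when $\alpha_{ij_i}\leq 1$ and $\B_{ij_i}(\x_{E_i},\x_{P_{j_i}})>0$, i.e., when $(\x_{E_i},\x_{P_{j_i}})\in\mR_P$ and the pursuer wins its \texttt{1v1} subgame; otherwise $a_{ij_i}=-L<0$, i.e., the evader in that subgame reaches the target. Taking the minimum over $i$ then yields the desired dichotomy: if $\B_\G(\x)=\min_i \Psi_i>0$, every assigned pursuer wins its subgame, so all evaders are captured away from the origin and the pursuer team wins; if $\B_\G(\x)\leq 0$, there exists some $i^\star$ with $a_{i^\star j_{i^\star}}=-L$, hence $E_{i^\star}$ reaches the target, so the evader team wins by definition.

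The main obstacle will be rigorously justifying the decoupling of the multiplayer interaction into independent \texttt{1v1} engagements. This requires arguing that once the matching $\G$ is committed (Item~\ref{itm:assumitem3}) and each pursuer is constrained to capture at most one evader (Item~\ref{itm:assumitem1}), no unilateral deviation — by an evader attempting to exploit an unassigned pursuer, or by a pursuer attempting to intercept an evader other than its own — can alter the outcome established by the 1v1 analysis of Section~\ref{sec:1v1}. A minor side observation worth recording is that $a_{ij}$ only takes strictly positive values or the strictly negative value $-L$, so $\B_\G(\x)$ never equals zero; the weak inequality $\B_\G(\x)\leq 0$ really means $\B_\G(\x)\leq -L<0$, which keeps the two cases exhaustive and mutually exclusive.
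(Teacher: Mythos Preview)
Your argument is correct and mirrors the paper's: both collapse $\Psi_i$ to $a_{ij_i}$ via feasibility of $\G\in\Sigma$ (Theorem~\ref{thm:lp}), then read off each \texttt{1v1} outcome from the sign of $a_{ij_i}$ using~\eqref{eq:aval} and Lemma~\ref{lem:1v1barrier}. The only substantive difference appears in the case $\B_\G(\x)\leq 0$. You stop at ``$E_{i^\star}$ reaches the target under $\G$, hence the evader team wins,'' which suffices for the lemma as literally stated. The paper goes one step further and invokes Theorem~\ref{thm:lp}: since $\G$ already minimizes the number of escaping evaders over $\Sigma$, \emph{every} feasible assignment lets at least one evader through, so the evader team wins regardless of the pursuers' choice of matching. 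That stronger conclusion is what Theorem~\ref{thm:multibarrier} later relies on to identify $\{\B_\G\leq 0\}$ with the game-theoretic evader winning region $\mR_E$; without it you have only shown the evader wins conditionally on $\G$. Your side observation that $a_{ij}\in\{\V_{ij}^P\}\cup\{-L\}$ never vanishes, so $\B_\G(\x)\leq 0$ forces $\B_\G(\x)\leq -L$, is also implicit in the paper's contradiction step.
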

	\begin{proof}
		First consider the case that $\B_\G(\x)>0$. We claim that $a_{ij}(\x_{E_i},\x_{P_j})>0,\ \forall\ ij\in A_{\bm\gamma^\star}$. Assume to the contrary that there exists $\bar{i}\bar{j}\in A_{\bm\gamma^\star}$ such that $a_{\bar i\bar j}(\x_{E_i},\x_{P_j})\leq0$. As defined in \eqref{eq:aval}, it is possible only if $a_{\bar i\bar j}(\x_{E_{\bar i}},\x_{P_{\bar j}})=-L$ resulting in $\Psi_{\bar i}((\x_{E_{\bar i}},\x_P),\bm\gamma^\star)=\sum_{j\in N}a_{\bar i j}(\x_{E_{\bar i}},\x_{P_j})\bm\gamma_{\bar i j}^\star<0$. Thus, $\B_\G(\x)<0$, resulting in a contradiction. Hence, every pursuer assigned to an evader can intercept it outside the target. Further, the constraint $\bm\gamma^\star.\bm 1_n=1_m$ ensures that every evader has been assigned to some pursuer. From the previous two arguments, it is evident that the pursuer team wins.
		
		Now, consider the case when $\B_\G(\x)\leq0$. There exists $\bar i\in M$ such that $\Psi_{\bar i}((\x_{E_{\bar i}},\x_P),\bm\gamma^\star)=\sum_{j\in N}a_{\bar i j}(\x_{E_{\bar i}},\x_{P_j})\bm\gamma^\star_{\bar ij}\leq0$. This is again possible only if $a_{\bar i\bar j}(\x_{E_{\bar i}},\x_{P_{\bar j}})\leq0$ for some $\bar j\in N$ with $\bar i\bar j\in A_{\bm\gamma^\star}$. This implies that, under the assignment $\bm\gamma^\star$, at least one evader reaches the target before interception. Therefore, using Theorem \ref{thm:lp}, every $\bm\mu\in\Sigma$ allows at least one evader to reach the target. Hence, the evader team wins. 
	\end{proof}
	As proved in Lemma \ref{lemma:win}, starting from some $\x\in\R^{3(m+m)}$, and choosing an assignment from the set of optimal assignments $\Gamma^\star(\x)$ lets us determine the winner of the interaction based on the corresponding barrier function. However, in the case that $|\Gamma^\star(\x)|>1$, there are multiple possible assignments of pursuers to evaders yielding the same payoff \eqref{eq:assignment_payoff}. In that case, does a choice of assignment from $\Gamma^\star(\x)$ alter the consequence of the game? We answer this question in Lemma \ref{lemma:invariance} and show that the result of the multiplayer game starting from some $\x\in\R^{3(m+n)}$ is independent of the choice of assignment from $\Gamma^\star(\x)$.

	\begin{lemma}[Invariance Property]
		The sets $\{\x\in\R^{3(m+n)}:\B_\G(\x)>0\ \text{ for any } \G\in\Gamma^\star(\x)\}$ and $\{\x\in\R^{3(m+n)}:\B_\G(\x)\leq0\ \text{ for any }\G\in\Gamma^\star(\x)\}$ are well-defined and form a partition of the global state space $\x\in\R^{3(m+n)}$.
		\label{lemma:invariance} 
	\end{lemma}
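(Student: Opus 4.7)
The plan is to show that the sign of $\B_\G(\x)$ is an invariant across the set $\Gamma^\star(\x)$ of optimal assignments; once this invariance is established, the partition property follows immediately since every real number satisfies exactly one of $>0$ or $\leq 0$. Concretely, I would fix an arbitrary $\x\in\R^{3(m+n)}$ and take two optimal assignments $\bm\gamma_1^\star,\bm\gamma_2^\star\in\Gamma^\star(\x)$, and establish that $\B_{\bm\gamma_1^\star}(\x)>0$ if and only if $\B_{\bm\gamma_2^\star}(\x)>0$. This rules out the pathological case where one $\x$ would simultaneously lie in both candidate sets (or in neither), and hence makes the two sets well-defined and mutually disjoint with union $\R^{3(m+n)}$.

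The main bridge is Lemma \ref{lemma:win} together with Corollary \ref{cor:evader_num}. First I would invoke Lemma \ref{lemma:win}, which identifies the sign of $\B_\G(\x)$ with the binary outcome of the game under $\G$: namely, $\B_\G(\x)>0$ if and only if every evader is captured by its assigned pursuer away from the target, i.e.\ $m_{\G}=0$; and $\B_\G(\x)\leq 0$ if and only if at least one evader escapes, i.e.\ $m_{\G}\geq 1$. Next, Corollary \ref{cor:evader_num} asserts that $m_{\bm\gamma_1^\star}=m_{\bm\gamma_2^\star}$ for any pair of optimal assignments. Combining these two facts yields $\B_{\bm\gamma_1^\star}(\x)>0 \Leftrightarrow m_{\bm\gamma_1^\star}=0 \Leftrightarrow m_{\bm\gamma_2^\star}=0 \Leftrightarrow \B_{\bm\gamma_2^\star}(\x)>0$, which is exactly the invariance we need.

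Having established invariance, I would close by noting that for each $\x\in\R^{3(m+n)}$ the real number $\B_\G(\x)$ lies in exactly one of the mutually exclusive alternatives $\B_\G(\x)>0$ or $\B_\G(\x)\leq 0$, so the two sets are disjoint, and since one of these alternatives must hold, their union equals the whole state space $\R^{3(m+n)}$, completing the partition claim.

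I do not anticipate a serious obstacle, since the content of the lemma is essentially a bookkeeping consequence of results already proved: Lemma \ref{lemma:win} links sign to outcome, and Corollary \ref{cor:evader_num} guarantees outcome invariance. The only subtlety to watch is the quantifier in the set definition (\emph{for any} $\G\in\Gamma^\star(\x)$), which must be interpreted as \emph{for all} so that the sets are genuinely functions of $\x$ alone and not of the particular selection of an optimal assignment; the invariance argument above is exactly what justifies this interpretation.
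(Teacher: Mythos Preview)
Your proposal is correct and follows essentially the same route as the paper: fix $\x$, take two optimal assignments, use Lemma~\ref{lemma:win} to translate the sign of $\B_\G(\x)$ into the number $m_\G$ of escaping evaders, and invoke Corollary~\ref{cor:evader_num} to conclude $m_{\bm\gamma_1^\star}=m_{\bm\gamma_2^\star}$, yielding sign invariance and hence the partition. The paper's argument is slightly more roundabout (it derives the converse direction via contradiction rather than stating the biconditional chain directly), but the substance is identical.
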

	\begin{proof}
		To prove that the set $\{\x\in\R^{3(m+n)}:\B_\G(\x)>0\ \text{ for any } \G\in\Gamma^\star(\x)\}$ is well-defined, we need to show that for any $\x\in\R^{3(m+n)}$, then $\B_{\bm\gamma_1^\star}(\x)>0$ if and only if $\B_{\bm\gamma_2^\star}(\x)>0$ for any distinct $\bm\gamma_1^\star,\bm\gamma_2^\star\in\Gamma^\star(\x)$. Also note that if $|\Gamma^\star(\x)|=1$ then the set $\{\x\in\R^{3(m+n)}:\B_\G(\x)>0\ \text{ for any } \G\in\Gamma^\star(\x)\}$ is trivially well-defined.
 		
		Now, consider some $\x\in\R^{3(m+n)}$. Let $\bm\gamma_1^\star,\bm\gamma_2^\star\in\Gamma^\star(\x)$ be two distinct assignments and assume that $\B_{\bm\gamma_1^\star}(\x)>0$. Then, from Lemma \ref{lemma:win} we have that the pursuer team wins starting from $\x$ under the assignment $\bm\gamma_1^\star$. This implies that all the evaders are intercepted away from the target, i.e., the number of evaders reaching the target $m_{\bm\gamma_1^\star}=0$. But, from Corollary \ref{cor:evader_num}, we have $m_{\bm\gamma_1^\star}=m_{\bm\gamma_2^\star}\ \forall \bm\gamma_1^\star, \bm\gamma_2^\star\in\Gamma^\star(\x)$. So, even under the assignment scheme $\bm\gamma_2^\star$ all the evaders are captured away from the target. Then, starting from $\x$, the pursuer team wins even under the assignment $\bm\gamma_2^\star$. Suppose $\B_{\bm\gamma_2^\star}(\x)\leq0$, then by Lemma \ref{lemma:win}, the evader team wins, which is a contradiction. Hence, we have $\B_{\bm\gamma_2^\star}(\x)>0$. Using a similar reasoning, it can be shown that if $\B_{\bm\gamma_2^\star}(\x)>0$ then $\B_{\bm\gamma_1^\star}(\x)>0$. Thus, we prove that the set $\{\x\in\R^{3(m+n)}:\B_\G(\x)>0\ \text{ for any } \G\in\Gamma^\star(\x)\}$ is well-defined. 
		
		Since, a barrier function is well-defined, for some $\x\in\R^{3(m+n)}$ if $\B_{\bm\gamma_1^\star}$ is not positive, then it must be $\leq0$. But this implies that for a state $\x\in\R^{3(m+n)}$, $\B_{\bm\gamma_1^\star}(\x)\leq0$ if and only if $\B_{\bm\gamma_2^\star}(\x)\leq0$ for any distinct $\bm\gamma_1^\star,\bm\gamma_2^\star\in\Gamma^\star(\x)$. Thus, the set $\{\x\in\R^{3(m+n)}:\B_\G(\x)\leq0\ \text{ for any }\G\in\Gamma^\star(\x)\}$ is well-defined. And the sets $\{\x\in\R^{3(m+n)}:\B_\G(\x)>0\ \text{ for any }\G\in\Gamma^\star(\x)\}$ and $\{\x\in\R^{3(m+n)}:\B_\G(\x)\leq0\ \text{ for any }\G\in\Gamma^\star(\x)\}$ clearly form a partition of the global state space. 
		
	\end{proof}

	Now, using the Lemmas \ref{lemma:win} and \ref{lemma:invariance}, we provide an analytical characterization of the pursuer and evader winning regions as the super and sub-level sets of the family of barrier functions introduced in \eqref{eq:nm_barrier}.
	\begin{theorem} Let Assumption \ref{assum:assumption1} hold. The pursuer and evader winning regions of the MRADG are obtained as the sets $\mR_P:=\{\x\in\R^{3(m+n)}:\B_\G(\x)>0\ \text{ for any } \G\in\Gamma^\star(\x)\}$ and $\mR_E:=\{\x\in\R^{3(m+n)}:\B_\G(\x)\leq0\ \text{ for any } \G\in\Gamma^\star(\x)\}$ respectively.
		\label{thm:multibarrier}
	\end{theorem}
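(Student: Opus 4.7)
The plan is to establish both set equalities by combining the forward implications from Lemma \ref{lemma:win} with a partition argument, mirroring the strategy used in the proof of Lemma \ref{lem:1v1barrier}.

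First, I would appeal to Lemma \ref{lemma:invariance} to record that the two candidate sets $S_P:=\{\x\in\R^{3(m+n)}:\B_\G(\x)>0\text{ for any } \G\in\Gamma^\star(\x)\}$ and $S_E:=\{\x\in\R^{3(m+n)}:\B_\G(\x)\leq0\text{ for any }\G\in\Gamma^\star(\x)\}$ are well-defined (i.e., the particular choice of $\G\in\Gamma^\star(\x)$ does not affect membership) and that together they partition $\R^{3(m+n)}$. By the definitions of the MRADG winning regions given in Section \ref{sec:prelim}, the pair $\{\mR_P,\mR_E\}$ is also a partition of $\R^{3(m+n)}$.

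Next, for the forward inclusions I would take an arbitrary $\x\in S_P$. By Lemma \ref{lemma:win}, starting from $\x$ under any optimal assignment $\G\in\Gamma^\star(\x)$ the pursuer team wins. Coupling this matching with the state-feedback optimal strategies supplied by Theorem \ref{theorem:1v1_varspeed} for each induced \texttt{1v1} subgame yields a concrete pursuer-team strategy that guarantees capture of every evader away from the target, irrespective of the evaders' controls. Hence $\x\in\mR_P$, so $S_P\subseteq\mR_P$. The symmetric argument using the $\B_\G(\x)\leq0$ clause of Lemma \ref{lemma:win}, together with Theorem \ref{theorem:1v1_varspeed_E} applied to the subgame of any evader that reaches the target, yields $S_E\subseteq\mR_E$.

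Finally, I would close the reverse inclusions by a short contradiction argument exploiting the fact that both pairs of sets are partitions. If some $\x\in\mR_P\setminus S_P$ existed, then by the partition property $\x$ would have to lie in $S_E$, and therefore in $\mR_E$ by the inclusion just established; but $\mR_P\cap\mR_E=\emptyset$, a contradiction. A mirrored argument establishes $\mR_E\subseteq S_E$, completing the characterization.

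The main conceptual work is, I believe, already discharged by Lemma \ref{lemma:invariance}, which itself leans on Theorem \ref{thm:lp} and Corollary \ref{cor:evader_num} to rule out the pathological scenario of two optimal assignments yielding opposite game outcomes at the same state. Once that invariance is in hand, the present theorem reduces to the routine partition argument sketched above, so I anticipate no further obstacle.
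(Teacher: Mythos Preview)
Your proposal is correct and follows essentially the same route as the paper: invoke Lemma \ref{lemma:invariance} for well-definedness and the partition property, use Lemma \ref{lemma:win} for the forward inclusions $S_P\subseteq\mR_P$ and $S_E\subseteq\mR_E$, and close the reverse inclusions by a partition/contradiction argument. The only cosmetic differences are that you make explicit the appeal to Theorems \ref{theorem:1v1_varspeed} and \ref{theorem:1v1_varspeed_E} (the paper leaves this implicit inside Lemma \ref{lemma:win}), and your reverse step uses the already-established inclusion $S_E\subseteq\mR_E$ whereas the paper re-invokes Lemma \ref{lemma:win} directly; both variants are equivalent.
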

	\begin{proof}
		First, we recall Lemma \ref{lemma:invariance} to note that the sets $\{\x\in\R^{3(m+n)}:\B_\G(\x)>0\ \text{ for any } \G\in\Gamma^\star(\x)\}$ and $\{\x\in\R^{3(m+n)}:\B_\G(\x)\leq0\ \text{ for any } \G\in\Gamma^\star(\x)\}$ are well-defined and form a partition of the global state space $\x\in\R^{3(m+n)}$.
		
		Consider $\x\in\{\x\in\R^{3(m+n)}:\B_\G(\x)>0\ \text{ for any } \G\in\Gamma^\star(\x)\}$. Since $\B_\G(\x)>0$ for any $\G\in\Gamma^\star(\x)$, from Lemma \ref{lemma:win} we know that the pursuer team wins. Thus, $\{\x\in\R^{3(m+n)}:\B_\G(\x)>0\ \text{ for any } \G\in\Gamma^\star(\x)\}\subseteq\mR_P$. Similarly for some  $\x\in\{\x\in\R^{3(m+n)}:\B_\G(\x)\leq0\ \text{ for any } \G\in\Gamma^\star(\x)\}$, using Lemma \ref{lemma:win} we know that the evader team wins. Thus, $\{\x\in\R^{3(m+n)}:\B_\G(\x)\leq0\ \text{ for any } \G\in\Gamma^\star(\x)\}\subseteq\mR_E$.
		
		Now, consider the opposite direction and let $\x\in\mR_P$. Suppose for contradiction $\x\in\{\x\in\R^{3(m+n)}:\B_\G(\x)\leq0\ \text{ for any } \G\in\Gamma^\star(\x)\}$. Then by Lemma \ref{lemma:win}, the evader team wins, i.e., $\x\in\mR_E$ which is a contradiction. Thus, $\mR_P\subseteq\{\x\in\R^{3(m+n)}:\B_\G(\x)>0\ \text{ for any } \G\in\Gamma^\star(\x)\}$. Using a similar reasoning, we show that $\mR_E\subseteq\{\x\in\R^{3(m+n)}:\B_\G(\x)\leq0\ \text{ for any } \G\in\Gamma^\star(\x)\}$. From the earlier part of the proof, it is then evident that $\mR_P=\{\x\in\R^{3(m+n)}:\B_\G(\x)>0\ \forall\G\in\Gamma^\star(\x)\}$ and $\mR_E=\{\x\in\R^{3(m+n)}:\B_\G(\x)\leq0\ \forall\G\in\Gamma^\star(\x)\}$.
	\end{proof}
	
	Now, we consider an example to illustrate the existence of multiple optimal assignments for both the pursuer and evader winning regions. 
	
	\begin{example}
		\label{ex:nonuniqe_gamma}
		Consider an MRADG involving 3 pursuers and 3 evaders, with the payoff matrices for two initial state positions ($\bar{\x}$ and $\x$)  given by
		\begin{equation}
			a(\bar\x)=\begin{bmatrix}
				3.23 & 1.34 & 2.21\\
				3.66 & 1.77 & 1.67\\
				2.89 & 3.24 & 9.56
			\end{bmatrix},\quad
			a(\x)=\begin{bmatrix}
				4.024 & -L & -L\\
				9.72 & -L & -L\\
				1.38 & -L & -L
			\end{bmatrix}.
		\end{equation}
		Consider first the payoff $a(\bar\x)$ and the resulting $\Gamma^\star(\bar\x)$ is $\{\bar{\bm\gamma}_{1}^\star,\bar{\bm\gamma}_{2}^\star\}$ which can be characterized by the index sets $A_{\bar{\bm\gamma}_{1}^\star}=\{11,22,33\}$ and $A_{\bar{\bm\gamma}_{2}^\star}=\{12,21,33\}$. As $\B_{\bar{\bm\gamma}_1^\star}(\bar\x)>0$ and $\B_{\bar{\bm\gamma}_2^\star}(\bar\x)>0$, this situation corresponds to the pursuer winning region, and the pursuer team receives equal payoff and is indifferent between the assignments $\bar{\bm\gamma}_{1}^\star$ and $\bar{\bm\gamma}_{2}^\star$. Now, consider the payoff $a(\x)$ and let the resulting $\Gamma^\star(\x)$ be $\{\bm\gamma_{1}^\star,\bm\gamma_{2}^\star\}$ which can be characterized by $A_{\bm\gamma_{1}^\star}=\{21,12,33\}$  and $A_{\bm\gamma_{2}^\star}=\{21,32,13\}$. Here, since $\B_{\bm\gamma_1^\star}(\x)\leq0$ and $\B_{\bm\gamma_2^\star}(\x)\leq0$, this situation is in the evader winning region, and similar to the earlier case, there are two assignments resulting in equal team payoff ($\Psi$). It is worth noting that for any situation in the evader winning region, if some assignment satisfying \eqref{eq:lpsolution} has $\bar m>1$ number of pairings resulting in the evader winning, then there are atleast $\bar m!$ other assignments which result in same payoff. 
		Now note that in the assignments corresponding to $a(\x)$, the equality in payoff for the two assignments arises as the pairings $\{12,33\}$ and $\{32,13\}$ result in a net payoff of $-2L$ for the team, along with the common of pairing of $\{21\}$. However, if a matching results in the corresponding evader winning, then the pursuer team should get the payoff corresponding to the $\texttt{1v1}$ RADG in the evader winning region. Thus, the actual payoff obtained by the pursuer team should be different from the one given by $a(\x)$ which is contrived to ensure that evaders are captured before they succeed. 
		
	\end{example}

	\subsection{Game of Degree}
	\label{subsec:god}
	Having solved the GoK for the MRADG, starting from any point $\x\in \R^{3(n+m)}$ in the global state space we now study the GoD for the MRADG in the pursuer winning region. 
	
	\begin{theorem}[Pursuer team winning region] Let Assumptions \ref{assum:assumption1} hold.
		Consider the  MRADG for $\x\in\mR_P$. If $L>L^\star(\x)$, then the Value function is $C^1$ (except at the dispersal surfaces i.e., there are multiple optima for the LP in \eqref{eq:lpsolution}) and it is the solution of the HJI-PDE \eqref{eq:hji}. The Value function is given by 
		\begin{equation}
			\V(\x)= \Psi(\x,\bm\gamma^\star), \label{eq:multi_value}
		\end{equation}
		where $\bm\gamma^\star\in \Gamma^\star(\x)$ satisfies \eqref{eq:lpsolution}. The optimal strategies of all the players are obtained from \eqref{eq:opt_con}.
		\label{thm:valueP}
	\end{theorem}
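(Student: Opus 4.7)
The plan is to exploit the decomposition of the multiplayer Hamiltonian into independent \texttt{1v1} Hamiltonians induced by any optimal assignment $\G \in \Gamma^\star(\x)$. Since $\x \in \mR_P$ and $L > L^\star(\x)$, Theorem \ref{thm:lp} together with Lemma \ref{lemma:win} guarantees that every evader is captured away from the target, so $A_{\G} = A_{\G}^W$, and for every matched pair $ij \in A_{\G}$ the projected state $(\x_{E_i},\x_{P_j})$ lies in the \texttt{1v1} pursuer winning region with $\alpha_{ij} \leq 1$. Hence the candidate Value reduces to
\begin{equation*}
\V(\x) \;=\; \Psi(\x,\G) \;=\; \sum_{ij \in A_{\G}} \V_{ij}^{P}(\x_{E_i},\x_{P_j}),
\end{equation*}
a sum of the \texttt{1v1} Value functions already characterized in Theorem \ref{theorem:1v1_varspeed}.

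First, I would handle regularity. On the open set where $|\Gamma^\star(\x)| = 1$, the optimal assignment is locally constant: it is a vertex of $\Sigma$ that remains strictly optimal under small perturbations of $\x$, by continuity of the payoffs $a_{ij}$ on the open set $\{\B_{ij}>0\}$ and finiteness of the vertex set of $\Sigma$. Hence $\V$ inherits $C^1$ smoothness from each $\V_{ij}^P$. On the dispersal surfaces where $|\Gamma^\star(\x)|>1$, Lemma \ref{lemma:invariance} guarantees that $\V$ is still well-defined and continuous, though the gradient can jump. Next, I would verify the HJI-PDE \eqref{eq:hji}. Because each evader state $\x_{E_i}$ and each matched pursuer state $\x_{P_j}$ appears in exactly one summand, while no summand depends on an unassigned pursuer's state (relevant when $n>m$), the gradient $\nabla\V$ has a block structure in which every block is either the corresponding $\nabla\V_{ij}^P$ or zero. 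Combined with additive separability of the Hamiltonian in player controls and the Cartesian product structure of $\mathcal U \times \mathcal V$, the joint $\min_\u\max_\v$ decouples into a sum of independent per-pair $\min_{\u_{E_i}}\max_{\v_{P_j}}$ problems, plus vanishing contributions from unassigned pursuers. Each decoupled term is exactly the \texttt{1v1} HJI equation \eqref{eq:ME2} shown to vanish in Theorem \ref{theorem:1v1_varspeed}, so \eqref{eq:hji} is satisfied. The optimal state feedback strategies then arise by substituting the block gradients into \eqref{eq:opt_con}, reproducing the \texttt{1v1} saddle-point laws independently on each matched pair, with arbitrary admissible controls for unassigned pursuers.

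The main obstacle will be cleanly justifying the two structural reductions: the decomposition of the joint $\min\max$, which is mechanical once additive separability of $\H$ and independence of the admissible sets are noted; and the local constancy of the optimal assignment, which requires showing that the strict LP optimality gap persists under small state perturbations away from dispersal surfaces. Once this structural stability is in hand, the remainder of the argument simply assembles the \texttt{1v1} HJI results of Section \ref{sec:1v1} together with the feasibility and invariance guarantees from Theorem \ref{thm:lp}, Lemma \ref{lemma:win}, and Lemma \ref{lemma:invariance}.
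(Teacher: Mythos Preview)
Your proposal is correct and follows essentially the same route as the paper: reduce $\Psi(\x,\G)$ to $\sum_{ij\in A_{\G}}\V_{ij}^P$ by observing that $\x\in\mR_P$ forces $a_{ij}>0$ (hence $a_{ij}=\V_{ij}^P$) on every matched pair, then verify \eqref{eq:hji} by exploiting additive separability so that the joint $\min_\u\max_\v$ decouples into the per-pair \texttt{1v1} HJI equations already shown to vanish in Theorem~\ref{theorem:1v1_varspeed}. Your treatment of regularity (local constancy of $\G$ away from dispersal surfaces) and of the unassigned pursuers is in fact more explicit than the paper's, which simply asserts the $C^1$ property and the interchange of sum and $\min\max$.
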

	\begin{proof}
		Let $\G\in\Gamma^\star(\x)$. Since $\x\in\mR_P$, we have $\B_\G(\x)>0$, and consequently $a_{ij}(\x_{E_i},\x_{P_j})>0,\ \forall ij\in A_{\bm\gamma^\star}$ as shown in Theorem \ref{thm:multibarrier}. But, as defined in \eqref{eq:aval}, $a_{ij}(\x_{E_i},\x_{P_j})$ takes positive values only when $a_{ij}(\x_{E_i},\x_{P_j})=\V_{ij}^P(\x_{E_i},\x_{P_j})$. Thus, the proposed Value function \eqref{eq:multi_value} can be written as 
		\begin{equation}
			\begin{aligned}
				\V(\x)=\Psi(\x,\bm\gamma^\star)=\sum\limits_{ij\in A_{\bm\gamma^\star}}a_{ij}(\x_{E_i},\x_{P_j})=\sum\limits_{ij\in A_{\bm\gamma^\star}}\V_{ij}^P(\x_{E_i},\x_{P_j}). \label{eq:multi_value_simplify}
			\end{aligned}
		\end{equation}
		The proposed Value of the multiplayer game is thus obtained as the sum of Values associated with individual \texttt{1v1} RADGs dictated by the optimal assignment. 
		
		We must now verify that the proposed Value function \eqref{eq:multi_value} satisfies the HJI-PDE \eqref{eq:hji}:
		\begin{align*}
			&\min\limits_{\u}\max\limits_{\v}\sum\limits_{ij\in A_{\G}}\Big\langle\Big[\tfrac{\partial \V_{ij}^P}{\partial\x_{E_i}}\ \tfrac{\partial\V_{ij}^P}{\partial\x_{P_j}}\Big],[\u_i\ \v_j] \Big\rangle \\
			&=\sum\limits_{ij\in A_\G} \min\limits_{\u_i}\max\limits_{\v_j}\left[\cufrac{\V_{ij}^P}{x_{E_i}} u_{x_i}+\cufrac{\V_{ij}^P}{y_{E_i}}u_{y_i}\right. \\
			& \left.+\cufrac{\V_{ij}^P}{z_{E_i}}u_{z_i}+\cufrac{\V_{ij}^P}{x_{P_j}}v_{x_j}+\cufrac{\V_{ij}^P}{y_{P_j}}v_{y_j}+\cufrac{\V_{ij}^P}{z_{P_j}}v_{z_j}\right]=0.
		\end{align*}

		The separability of the expression in terms of the individual controls of all the players allows the interchange of the summation with the minmax operator. Hence, the Value function proposed in \eqref{eq:multi_value} satisfies the HJI-PDE and provides the solution to the MRADG. The closed loop optimal controls can also be obtained from the Value function as given in \eqref{eq:opt_con}. Finally the dispersal surface is defined as the subset of state space where $\Gamma^\star(\x)$ has more than one element. 
	\end{proof}
	\begin{remark}
		The dispersal surface is a set of states starting from which, there are multiple equally optimal strategies for each team. To move away from the dispersal surface, each team can choose an assignment scheme randomly from the available optimal options in $\Gamma^\star(\x)$ at the instant the game begins. After an infinitesimal amount of time, the state moves out of the dispersal surface and the optimal strategies become fixed. However, this initial choice may favor one team and disadvantage their opponents; see \cite[Chapter 6]{isaacs_1965}.
	\end{remark}

	As seen in Theorem \ref{thm:valueP}, the solution to the GoD in the pursuer winning region can be determined using the assignment obtained from the optimization problem \eqref{eq:lpsolution}. However, this optimal assignment need not be the best solution in the evader winning region, as shall be illustrated in Example \ref{ex:refine}.

	\begin{example}
		\label{ex:refine}
		Now, continuing with Example \ref{ex:nonuniqe_gamma}, consider the payoff matrix $a(\x)$ given by
		\begin{equation}	
			a(\x)=\begin{bmatrix}
				4.024 & -L & -L\\
				9.72 & -L & -L\\
				1.38 & -L & -L
			\end{bmatrix}.
		\end{equation}
		As seen earlier, the optimal assignments are given by $\Gamma^\star(\x)=\{\bm\gamma_{1}^\star,\bm\gamma_{2}^\star\}$ which can be characterized by $A_{\bm\gamma_{1}^\star}=\{21,12,33\}$  and $A_{\bm\gamma_{2}^\star}=\{21,32,13\}$. Consider the assignment $\bm\gamma_1^\star$. It is natural for the pursuer team to receive a payoff based on the \texttt{1v1} interactions induced by the assignment which for this case results in $\V_{21}^P(\x_{E_2},\x_{P_1})+\V_{12}^E(\x_{E_1},\x_{P_2})+\V_{33}^E(\x_{E_3},\x_{P_3})=3.21$. On the other hand, choosing the assignment $\bm\gamma_{2}^\star$ results in a net payoff of $2.58$ using a calculation similar to the previous assignment. Clearly, there is a motive for the pursuer team to prefer the assignment $\bm\gamma_{1}^\star$ over $\bm\gamma_{2}^\star$. Hence, there is a need of further refining the set $\Gamma^\star(\x)$ based on a new payoff scheme for the evader winning region. This particular example has been considered with simulations in Example \ref{ex:Re}.
	\end{example}
	\begin{definition}
		Consider a matrix valued function $V:\R^{3(m+n)}\rightarrow\R^{m\times n}$ where each element is defined as 
		\begin{equation}
			V_{ij}(\x_{E_i},\x_{P_j})=\begin{cases}
				\V_{ij}^P(\x_{E_i},\x_{P_j}); \quad &\B_{ij}(\x_{E_i},\x_{P_j})>0,~\alpha_{ij}\leq1\\
				\V_{ij}^E(\x_{E_i},\x_{P_j}); \quad &\B_{ij}(\x_{E_i},\x_{P_j})\leq0,~ \alpha_{ij}\leq1\\
				-L; \quad &\text{otherwise}.
			\end{cases}
		\end{equation}
		This matrix is termed as the Value matrix denoted by $V(\x)$.
		\label{def:Vmat}
	\end{definition}
	
	\begin{lemma}[Refinement of $\Gamma^\star$]
		Consider $\x\in\R^{3(m+n)}$ and let $\Gamma^\star(\x)$ satisfy \eqref{eq:lpsolution}. Using the Value matrix $(V(\x))$ from Definition \ref{def:Vmat}, define a subset $\Theta^\star(\x)\subseteq\Gamma^\star(\x)$ as an optimization problem given by 
		\begin{equation}
			\Theta^\star(\x):=\argmax\limits_{\bm\theta\in\Gamma^\star(\x)}\sum\limits_{ij\in A_\theta}V_{ij}(\x_{E_i},\x_{P_j}). \label{eq:refine}
		\end{equation}
		Consider $\bar L^\star(\x)$ defined by 
		\begin{equation}
			\bar L^\star(\x)=2\sum\limits_{i\in M}\max\limits_{j\in \bar N_i}|V_{ij}(\x_{E_i},\x_{P_j})|,
		\end{equation}
		where $\bar N_i=\{j\in N:\alpha_{ij}\leq1\}$. Starting from some $\x\in\R^{3(m+n)}$, if $L>\bar L^\star(\x)$, then every $\Th\in\Theta^\star(\x)$ ensures that $|\{ij\in A_\Th:\alpha_{ij}>1\}|$ is minimized.
		\label{lem:refine}
	\end{lemma}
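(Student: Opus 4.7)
The plan is to adapt the contradiction argument used in Theorem \ref{thm:lp}, replacing the role of ``number of evaders reaching the target'' with ``number of pairings satisfying $\alpha_{ij}>1$'' and replacing the payoff $a_{ij}$ with $V_{ij}$ from Definition \ref{def:Vmat}. The core observation is that, by construction of $V$, any pairing $ij$ with $\alpha_{ij}>1$ contributes the large penalty $-L$ to the refinement objective, whereas any pairing with $\alpha_{ij}\leq1$ contributes either $\V_{ij}^P$ or $\V_{ij}^E$, both bounded in absolute value by $\max_{j\in\bar N_i}|V_{ij}|$.

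First, for each feasible $\bm\sigma\in\Gamma^\star(\x)$ I would split the index set as
\begin{equation*}
A_{\bm\sigma}^{>}:=\{ij\in A_{\bm\sigma}:\alpha_{ij}>1\},\qquad A_{\bm\sigma}^{\leq}:=A_{\bm\sigma}\setminus A_{\bm\sigma}^{>},
\end{equation*}
and write $m_{\bm\sigma}:=|A_{\bm\sigma}^{>}|$. Fix $\bm\theta^\star\in\Theta^\star(\x)$ and suppose for contradiction there exists $\bm\mu\in\Gamma^\star(\x)$ with $m_{\bm\mu}<m_{\bm\theta^\star}$. I would then compute
\begin{equation*}
\sum_{ij\in A_{\bm\mu}}V_{ij}-\sum_{ij\in A_{\bm\theta^\star}}V_{ij}=\Bigl(\sum_{ij\in A_{\bm\mu}^{\leq}}V_{ij}-\sum_{ij\in A_{\bm\theta^\star}^{\leq}}V_{ij}\Bigr)+(m_{\bm\theta^\star}-m_{\bm\mu})\,L.
\end{equation*}

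Next, because every feasible assignment matches each evader $E_i$ to at most one pursuer, for either $\bm\sigma\in\{\bm\mu,\bm\theta^\star\}$ we have $|\sum_{ij\in A_{\bm\sigma}^{\leq}}V_{ij}|\leq\sum_{i\in M}\max_{j\in\bar N_i}|V_{ij}|$ (with the convention that the max over an empty $\bar N_i$ is zero, which is consistent since such indices are excluded from $A_{\bm\sigma}^{\leq}$). Applying this bound to both terms by the triangle inequality gives a difference of at most $\bar L^\star(\x)$ in absolute value for the first bracket. Since $m_{\bm\theta^\star},m_{\bm\mu}$ are integers with $m_{\bm\theta^\star}-m_{\bm\mu}\geq 1$ and the hypothesis $L>\bar L^\star(\x)$ holds, the expression is strictly positive, contradicting the optimality of $\bm\theta^\star$ for \eqref{eq:refine} over $\Gamma^\star(\x)$.

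Since $\bm\mu\in\Gamma^\star(\x)$ was arbitrary, this establishes $m_{\bm\theta^\star}\leq m_{\bm\mu}$ for every $\bm\mu\in\Gamma^\star(\x)$, and since $\bm\theta^\star\in\Theta^\star(\x)$ was arbitrary, the claim follows. The main subtlety, and the step I expect to require the most care, is the bound on $|\sum_{ij\in A_{\bm\sigma}^{\leq}}V_{ij}|$: one must carefully argue that although $\V_{ij}^E$ may be negative, the per-evader maximum $\max_{j\in\bar N_i}|V_{ij}|$ is the tightest state-dependent bound that decouples the sum from the specific assignment and thereby yields the clean inequality $L>\bar L^\star(\x)$ as the sufficient condition.
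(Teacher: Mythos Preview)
Your proposal is correct and follows essentially the same contradiction argument as the paper: partition each index set by whether $\alpha_{ij}>1$, express the difference of refinement objectives as a bounded bracket plus $(m_{\bm\theta^\star}-m_{\bm\mu})L$, and use $|\sum_{ij\in A_{\bm\sigma}^{\leq}}V_{ij}|\leq \bar L^\star(\x)/2$ together with $L>\bar L^\star(\x)$ to force strict positivity. The paper presents the final inequality by splitting $L$ into two halves and pairing each with one of the sums, but this is only a cosmetic difference from your direct triangle-inequality bound.
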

	\begin{proof}
		Fix some $\Th\in\Theta^\star(\x)$ and let $m_\Th=|\{ij\in A_\Th:\alpha_{ij}>1\}|$. Now, assume the existence of $\bm\mu\in\Gamma^\star(\x)$ such that $m_\mu:=|\{ij\in A_\mu:\alpha_{ij}>1\}|<m_\Th$. Let $A_\mu^{WE}:=\{ij\in A_{\bm\mu}:\alpha_{ij}\leq1\}$ and $A_\mu^{LE}:=\{ij\in A_{\bm\mu}:\alpha_{ij}>1\}$ for every $\mu\in\Gamma^\star$. Using these sets, similar to the proof in Theorem \ref{thm:lp}, consider the difference:
		\begin{align*}
			&\sum\limits_{ij\in A_\mu}V_{ij}(\x_{E_i},\x_{P_j})-\sum\limits_{ij\in A_\Th}V_{ij}(\x_{E_i},\x_{P_j})\\
			&= \left(  \sum\limits_{ij\in A_\mu^{WE}}V_{ij}(\x_{E_i},\x_{P_j})-\sum\limits_{ij\in A_\Th^{WE}}V_{ij}(\x_{E_i},\x_{P_j})\right) +(m_\Th-m_\mu)L
		\end{align*}

		By the definition of $\bar L^\star(\x)$, we have that $|\sum\limits_{ij\in A_\mu^{WE}}V_{ij}(\x_{E_i},\x_{P_j})|\leq \frac{\bar L^\star(\x)}{2}\ \forall \bm\mu\in\Gamma^\star(\x)$. By assumption, we have $L>\bar L^\star(\x)$ and $m_\Th>m_\mu$. This results in $m_\Th-m_\mu\geq1$ as $m_\Th$ and $m_\mu$ are both integers. Thus, the above difference can now be written as 
		\begin{align*}
			\left( \sum\limits_{ij\in A_\mu^{WE}}V_{ij}(\x_{E_i},\x_{P_j})+\frac{L}{2}\right) +\left(- \sum\limits_{ij\in A_\Th^{WE}}V_{ij}(\x_{E_i},\x_{P_j})+\frac{L}{2}\right)>0.
		\end{align*}
		But, this is a contradiction to the fact that $\Theta^\star(\x)$ satisfies \eqref{eq:refine}. Hence $m_\Th\leq m_\mu\ \forall \bm\mu\in\Gamma^\star(\x)$, thus proving the Lemma. 
	\end{proof}
		
	\begin{remark}
		Since $\Theta^\star(\x)\subseteq\Gamma^\star(\x)$, as per Theorem \ref{thm:multibarrier} all the assignments in $\bm\theta^\star\in\Theta^\star(\x)$ construct the same partitions for pursuer and evader winning regions using the barrier function $B_{\bm\theta^\star}$ as defined in \eqref{eq:nm_barrier}. Further, as shown in Lemma \ref{lem:refine}, if $L>\bar L^\star(\x)$ then the refinement provides assignments that satisfies \eqref{eq:refine} while ensuring that the least number of pairings occur where the resulting interaction cannot be predicted. 
	\end{remark}
		
	
	\begin{theorem}[Evader team winning region] Let Assumptions \ref{assum:assumption1} hold. Consider the  MRADG for $\x\in\mR_E$ and $L>\bar L^\star(\x)$. The Value function is $C^1$ (except at the dispersal surfaces i.e., there are multiple elements in the set $\Theta^\star$) and it is the solution of the HJI-PDE \eqref{eq:hji} if the set $\{ij\in A_{\Th}: \alpha_{ij}>1\}$ is empty. The Value function is given by 
		\begin{equation}
			\V(\x)= \sum\limits_{ij\in A_{\Th}}V_{ij}(\x_{E_i},\x_{P_j}), \label{eq:multi_value_E}
		\end{equation}
		where $\Th\in \Theta^\star(\x)$. The optimal strategies of all the players are obtained from \eqref{eq:opt_con} and \eqref{eq:opt_conE}.
		\label{thm:valueE}
	\end{theorem}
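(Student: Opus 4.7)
The plan is to mirror the proof of Theorem \ref{thm:valueP}, leveraging the separability of the multiplayer Hamiltonian across the pursuer-evader pairs dictated by the refined optimal assignment $\Th \in \Theta^\star(\x)$. The hypothesis that $\{ij \in A_\Th : \alpha_{ij} > 1\}$ is empty is precisely what avoids the pathological $-L$ entries in the Value matrix and allows us to piece together the global Value function from admissible 1v1 ingredients.

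First, I would invoke Lemma \ref{lem:refine} together with $L > \bar L^\star(\x)$ to justify that choosing $\Th \in \Theta^\star(\x)$ is the correct refinement in $\mR_E$, and then use the hypothesis $\{ij \in A_\Th : \alpha_{ij} > 1\} = \emptyset$ so that, by Definition \ref{def:Vmat}, every term $V_{ij}(\x_{E_i},\x_{P_j})$ appearing in \eqref{eq:multi_value_E} equals either $\V_{ij}^P(\x_{E_i},\x_{P_j})$ (when $\B_{ij}>0$) or $\V_{ij}^E(\x_{E_i},\x_{P_j})$ (when $\B_{ij}\leq 0$). Next, I would substitute the candidate $\V(\x)=\sum_{ij\in A_\Th}V_{ij}(\x_{E_i},\x_{P_j})$ into the HJI-PDE \eqref{eq:hji} and observe that, because $\Th$ is a feasible assignment ($\Theta^\star(\x)\subseteq\Gamma^\star(\x)\subseteq\Sigma$), each pursuer index $j$ and each evader index $i$ appears in at most one summand. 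Consequently, the partial derivatives $\partial \V/\partial \x_{E_i}$ and $\partial\V/\partial\x_{P_j}$ collapse to the corresponding 1v1 gradients, and the team Hamiltonian decouples into independent 1v1 Hamiltonians. This separability lets one push the $\min_\u\max_\v$ inside the summation, exactly as in Theorem \ref{thm:valueP}.

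The concluding step would then invoke the 1v1 results term by term: for $ij \in A_\Th$ with $\B_{ij}(\x_{E_i},\x_{P_j})>0$, Theorem \ref{theorem:1v1_varspeed} guarantees that $\V_{ij}^P$ satisfies the corresponding 1v1 HJI-PDE, while for $ij \in A_\Th$ with $\B_{ij}(\x_{E_i},\x_{P_j})\leq 0$, Theorem \ref{theorem:1v1_varspeed_E} yields the same conclusion for $\V_{ij}^E$. Each summand therefore vanishes, and the HJI-PDE is satisfied. Feedback strategies follow by substituting the piecewise-defined partial derivatives into \eqref{eq:opt_con} on pairs that sit in the local pursuer winning region, and into \eqref{eq:opt_conE} on pairs that sit in the local evader winning region; in each case the player's control is aligned with the gradient of a single $V_{ij}$ term because of the same separability.

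The main obstacle I anticipate is handling the \emph{mixed} structure of $A_\Th$: some pairs live in their induced 1v1 pursuer winning region and contribute $\V_{ij}^P$, while others live in the corresponding evader winning region and contribute $\V_{ij}^E$. One must verify that the separability argument remains valid across this mixture and that the gradient of the aggregated $\V$ is continuous along the interfaces where individual pairs transition between the two regimes; this follows because $\V_{ij}^P=\V_{ij}^E$ on the 1v1 barrier $\B_{ij}=0$ (both reduce to $R_{E_i}/\alpha_{ij}-R_{P_j}$ there), but the argument needs to be made explicit. A secondary, milder issue is the dispersal surface where $|\Theta^\star(\x)|>1$: there the invariance-style reasoning of Lemma \ref{lemma:invariance} applied to the refined optimization \eqref{eq:refine} ensures that all optimal assignments yield the same value of $\V(\x)$, so $\V$ is still continuous, and $C^1$ off this thin set, which is precisely what the theorem claims.
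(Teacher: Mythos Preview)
Your proposal is correct and follows essentially the same route as the paper: decompose $A_{\Th}$ into the pairs where the induced \texttt{1v1} game lies in $\mR_P$ versus $\mR_E$ (the paper uses the notation $A_{\Th}^W$ and $A_{\Th}^L$ from \eqref{eq:ass_decompose}, which under the emptiness hypothesis coincides with your $\B_{ij}>0$ versus $\B_{ij}\leq 0$ split), invoke the feasibility $\Th\in\Sigma$ to separate the Hamiltonian, and apply Theorems \ref{theorem:1v1_varspeed} and \ref{theorem:1v1_varspeed_E} term by term. Your discussion of continuity across the \texttt{1v1} barrier and of the dispersal surface is in fact more explicit than what the paper records, but the underlying argument is the same.
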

	\begin{proof}
		Note that the set $\{ij\in A_\Th:\alpha_{ij}>1\}$ has the same cardinality independent of the choice of $\Th\in\Theta^\star(\x)$ which can be proved in a manner similar to Lemma \ref{lemma:invariance}, using the result shown in Lemma \ref{lem:refine}. 
		
		Since $\x\in\mR_E$, there is atleast one evader that cannot be intercepted by the assigned pursuer. Following the definitions of $A_{\Th}^W$ and $A_{\Th}^L$ from \eqref{eq:ass_decompose}, 
		and due to the emptiness of the set $\{ij\in A_{\Th}:\alpha_{ij}>1\}$, we can write the proposed Value function \eqref{eq:multi_value_E} as 
		\begin{equation}
			\begin{aligned}
				\V(\x)=\sum\limits_{ij\in A_{\Th}^W}\V_{ij}^P(\x_{E_i},\x_{P_j})+\sum\limits_{ij\in A_{\Th}^L}\V_{ij}^E(\x_{E_i},\x_{P_j}).
			\end{aligned}
		\end{equation}
		We must now verify that the proposed Value function \eqref{eq:multi_value_E} satisfies the HJI equation:
		\begin{align*} 
			&\min\limits_{\u}\max\limits_{\v}\left[ \sum\limits_{ij\in A_{\Th}^W}\Big\langle\Big[\tfrac{\partial \V_{ij}^P}{\partial\x_{E_i}}\ \tfrac{\partial\V_{ij}^P}{\partial\x_{P_j}}\Big],[\u_i\ \v_j] \Big\rangle \right. \\
			& \qquad\qquad\left.+ \sum\limits_{ij\in A_{\Th}^L}\Big\langle\Big[\tfrac{\partial \V_{ij}^E}{\partial\x_{E_i}}\ \tfrac{\partial\V_{ij}^E}{\partial\x_{P_j}}\Big],[\u_i\ \v_j] \Big\rangle \right] \\
			&=\sum\limits_{ij\in A_{\Th}^W} \min\limits_{\u_i}\max\limits_{\v_j}\left[\cufrac{\V_{ij}^P}{x_{E_i}} u_{x_i}+\cufrac{\V_{ij}^P}{y_{E_i}}u_{y_i}\right.\\
			&\qquad \left.+\cufrac{\V_{ij}^P}{z_{E_i}}u_{z_i}+\cufrac{\V_{ij}^P}{x_{P_j}}v_{x_j}+\cufrac{\V_{ij}^P}{y_{P_j}}v_{y_j}+\cufrac{\V_{ij}^P}{z_{P_j}}v_{z_j}\right] \end{align*}
		\begin{align*} 
			&+\sum\limits_{ij\in A_{\Th}^L} \min\limits_{\u_i}\max\limits_{\v_j}\left[\cufrac{\V_{ij}^E}{x_{E_i}} u_{x_i}+\cufrac{\V_{ij}^E}{y_{E_i}}u_{y_i}\right. \\
			&\left.+\cufrac{\V_{ij}^E}{z_{E_i}}u_{z_i}+\cufrac{\V_{ij}^E}{x_{P_j}}v_{x_j}+\cufrac{\V_{ij}^E}{y_{P_j}}v_{y_j}+\cufrac{\V_{ij}^E}{z_{P_j}}v_{z_j}\right]=0. 
		\end{align*}
		Again, due to the separability of the expression in terms of the individual controls of all the players allows the interchange of the summations with the minmax operator. Hence, the Value function proposed in \eqref{eq:multi_value_E} and provides the solution to the MRADG. The closed loop controls can also be obtained from the Value function as given in \eqref{eq:opt_con} and \eqref{eq:opt_conE} depending on the result of the interaction of the agents along with the players they were assigned to. \par
		Similar to the reasoning in Theorem \ref{thm:valueP}, a dispersal surface arises in this scenario if the set $\Theta^\star(\x)$ is nonempty. Again, in such a scenario, the game can move away from the initial surface by an initial random choice for an infinitesimal amount of time. 
	\end{proof}

	\section{Numerical Illustrations}
	\label{sec:num}
	We consider a few representative MRADG examples to illustrate our solution approach. The computations were done in MATLAB 2022b on a workstation PC with a Core i9-13900K processor and a memory of 128GB. 
	\begin{example}
		\label{ex:efficiency}
		We illustrate the efficiency of our linear programming based optimal assignment by using the MATLAB \texttt{linprog} function. We compare our results with a brute-force algorithm (as discussed in Remark \ref{rem:efficiency}); see Table \ref{tab:Table1}. This table shows the average time required to compute assignments via brute-force and linear programming (in seconds) for different values of $n$ and $m$. The last column shows the required computational time for coordinate and distance calculations (in milliseconds). Although the simplex algorithm has an exponential time complexity, it can significantly outperform a brute-force method in most real-world cases, as evident in Table \ref{tab:Table1}.  
		\begin{table}[h]
			\caption{Average computation time for various multiplayer cases. NA indicates that the set of feasible assignments cannot be generated by the brute-force method due to memory limitations.}
			\label{tab:Table1}
			\centering
			{\setlength{\tabcolsep}{5pt} 
				\renewcommand{\arraystretch}{1} { 
					\begin{tabular}{| m{4em} | m{4em} | m{4em} | m{5em} |}
						\hline 
						Player number ($n,m$) & Brute Force Assignment(s) & LP-Based Assignment(s) &Coordinates/ Distance(ms) \\ 
						\hline \vspace{1.5pt}
						3,3 & 0.0188 & 0.0019 & 0.0074\\
						\hline 	 \vspace{1.5pt}
						7,5 & 0.0953 & 0.0022 & 0.0118 \\
						\hline \vspace{1.5pt}
						10,8 & 0.3439 & 0.0018 & 0.0178\\
						\hline \vspace{1.5pt}
						11,7 & 0.3264 & 0.0021 & 0.0182\\
						\hline  \vspace{1.5pt}
						12,10 & 59.561 & 0.0020 & 0.0285\\
						\hline  \vspace{1.5pt}
						20,15 & NA & 0.0020 & 0.0306 \\
						\hline \vspace{1.5pt} 
						50,40 & NA & 0.0039 & 0.0787\\
						\hline \vspace{1.5pt} 
						100,100 & NA & 0.0231 & 0.2032 \\
						\hline
					\end{tabular}
			} }
		\end{table} 
	\end{example}
	\begin{example}
		\label{ex:Rp}
		Consider an MRADG with the origin as the target with 3 pursuers and 3 evaders. The pursuers are situated at  $\x_{P_1}=(-6.77,-2.95,0.01),\ \x_{P_2}=(-3.34,-3.96,-3.33)$ and $\x_{P_3}=(4.76,-13.35,-0.61)$, while the evaders are situated at $\x_{E_1}=(4.92,-7.91,4.43),\ \x_{E_2}=(-8.07,2.73,-5.91)$ and $\x_{E_3}=(-6.73,-10.65,-12.49)$. The speeds of the pursuers are $V_{P_1}=1.71,\ V_{P_2}=2.23$ and $V_{P_3}=2.28$, while those of the evaders are $U_{E_1}=1.69,\ U_{E_2}=1.01$ and $U_{E_3}=1.84$. The pursuing team wins in this particular case and the optimal assignment is given by the index set $\{12,21,33\}$. The resulting Value of the game is $18.63$. The optimal trajectories for all the agents can be seen in Figure \ref{fig:optimal}. Note that in this setting there is a subset of pursuers who are not superior to all the evaders, namely $P_1$ is slower than $E_3$. Furthermore, $E_1$ cannot be captured by $P_3$ , and $E_3$ cannot be captured by $P_1$. 
		This particular example also serves as an illustration for the sensitivity of the assignment to the value of $L$. First we note that for the given initial positions $L^\star=23.84$. If $L>L^\star$, then the optimal assignment is as given above. However, for values of $L\leq L^\star$ the optimal assignment may not have the property of maximum capture of evaders. To illustrate this issue, consider $L=1$, then the optimal assignment provided by the LP in \eqref{eq:lpsolution} is characterized by $\{13,21,32\}$. Despite obtaining a payoff larger than the Value of the game, the assignment fails as $E_1$ and $E_2$ are assigned to pursuers who cannot capture them. Thus, $L$ must be large enough to ensure that the pursuer team goal of capturing all evaders is kept as a priority. Here, we also illustrate the robustness of the optimal state feedback strategies. Supposing that the evaders decided not to follow the obtained optimal strategies and instead decided to head straight to the origin with the pursuers committed to their optimal strategies, then the ensuing nonoptimal play results in a higher Value of $20.26$ for the game, favoring the pursuers. These trajectories are shown in Figure \ref{fig:nonoptimal}.
	\end{example}
	\begin{figure} 
		\centering
		\begin{subfigure}[h]{0.24\textwidth}
			\centering
			\includegraphics[scale=.5]{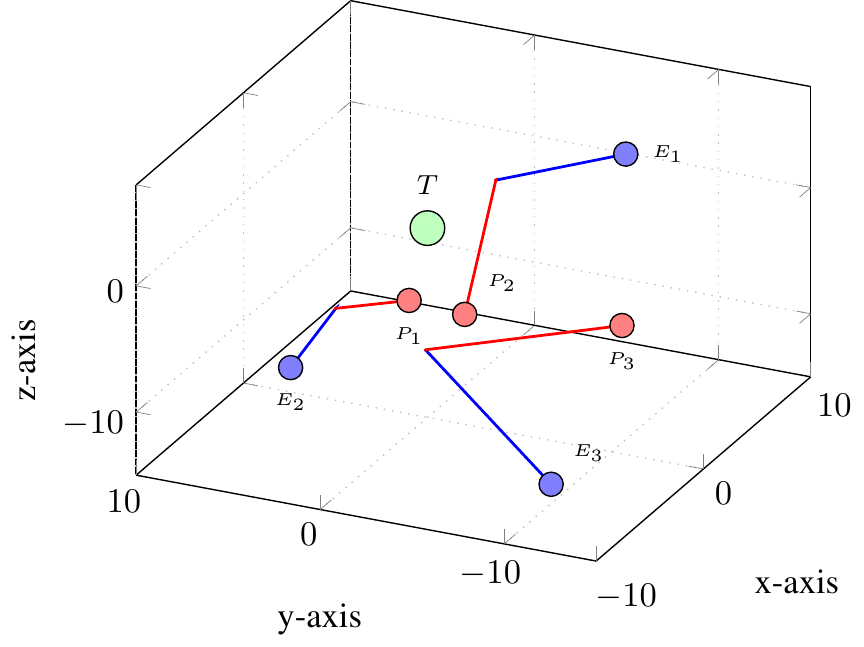}		
			\caption{Optimal Play}
			\label{fig:optimal}
		\end{subfigure}  
		\begin{subfigure}[h]{0.24\textwidth}
			\centering 
			\includegraphics[scale=.5]{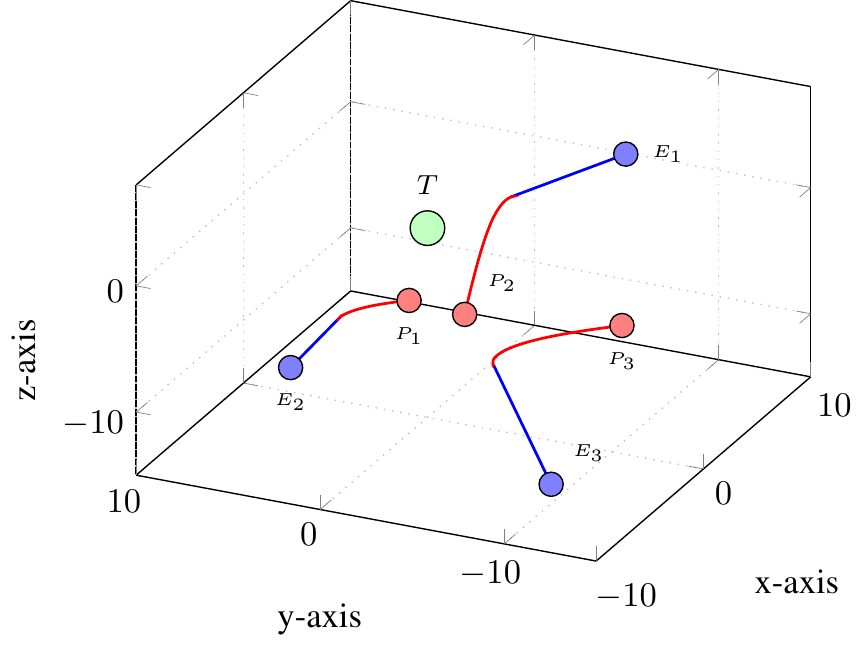}
			\caption{Nonoptimal Play}
			\label{fig:nonoptimal}
		\end{subfigure}
		\caption{Trajectories of a 3v3 game in $\mR_P$}
		\label{fig:num_ill}
	\end{figure}
	
	\begin{example}
		\label{ex:Re}
		Consider an MRADG with 3 pursuers and 3 evaders. The pursuers are situated at  $\x_{P_1}=(0.38,-7.06,1.17),\ \x_{P_2}=$($0.10,-7.45,-10.68)$ and $\x_{P_3}=(0.80,3.98,-8.45)$, while the evaders are situated at $\x_{E_1}=(-1.57,-6.23,1.67),\ \x_{E_2}=(0.38,-11.65,2.24)$ and $\x_{E_3}=(4.79,-4.71,2.68)$. The speeds of the pursuers are $V_{P_1}=2.09,\ V_{P_2}=1.65$ and $V_{P_3}=1.69$, while those of the evaders are $U_{E_1}=1.41,\ U_{E_2}=1.75$ and $U_{E_3}=1.83$. The evading team wins in this particular case and the optimization problem \eqref{eq:lpsolution} has two solutions $\bm\gamma_1^\star$ and $\bm\gamma_2^\star$ characterized by $\{21,12,33\}$ and $\{21,13,32\}$ respectively, as discussed in Example \ref{ex:refine}. The optimal trajectories as a result of both of these assignments are shown in Figure \ref{fig:evader_win}. The refined set $\Theta^\star$ contains a single element $\Th$ characterized by $\{21,12,33\}$ which equals $\bm\gamma_1^\star$. Further, in this scenario, there is a possible pairing of pursuer $P_3$ and evader $E_2$ with $\B_{23}\geq0$ and $\alpha_{23}>1$ for which case we do not have a Value function. However, the set $\{ij\in A_\Th:\alpha_{ij}>1\}$ remains empty which allows us to provide a Value function for the complete game thus guaranteeing optimality of the strategies. 
	\end{example}
	
	\begin{figure} 
		\centering
		\begin{subfigure}[h]{0.24\textwidth}
			\centering
			\includegraphics[scale=.5]{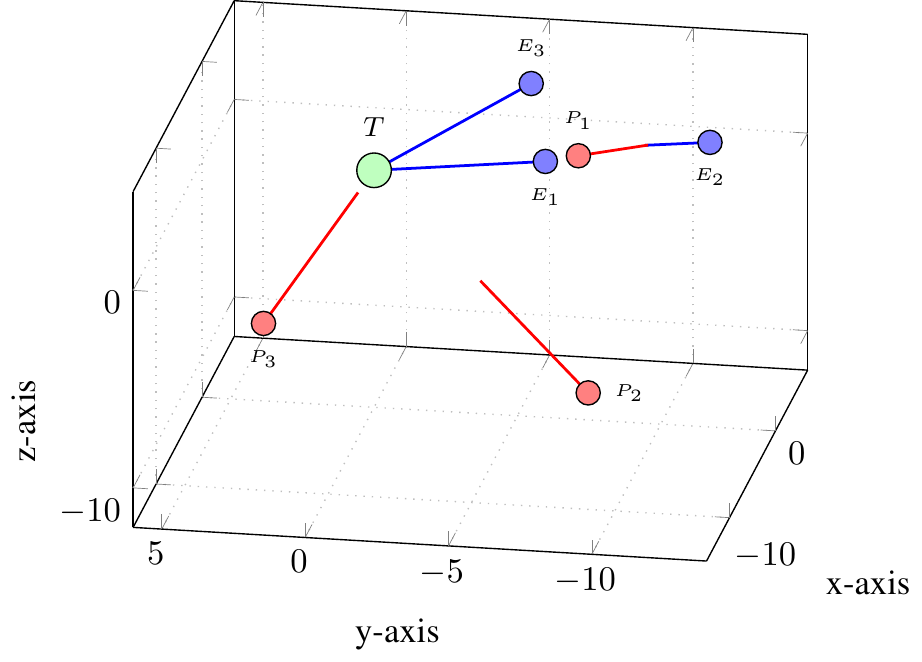}		
			\caption{First assignment}
			\label{fig:evader_win_1}
		\end{subfigure}  
		\begin{subfigure}[h]{0.24\textwidth}
			\centering 
			\includegraphics[scale=.5]{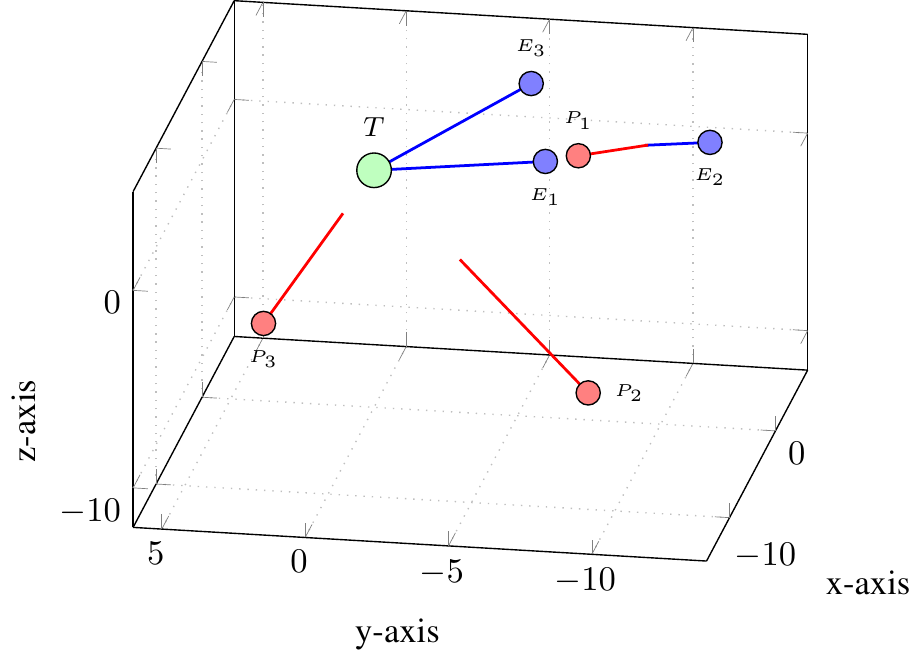}
			\caption{Second assignment}
			\label{fig:evader_win_2}
		\end{subfigure}
		\caption{Trajectories of a 3v3 game in $\mR_E$}
		\label{fig:evader_win}
	\end{figure}
	
	\begin{example}
		\label{ex:ds}
		Now, consider a reach-avoid game with $3$ pursuers and $2$ evaders with $\x_{P_1}=(1,0,0),\ \x_{P_2}=(1,0,0.5),\ \x_{P_3}=(1,0,-0.5),\ \x_{E_1}=(0.75,1,0)$ and $\x_{E_2}=(0.75,-1,0)$. All the evaders are assumed to have a speed of 0.5 units while the pursuers have unit speed. Amongst the $6$ feasible assignments, $4$ achieve the optimal Value and hence the state belongs to a dispersal surface. Thus at the beginning of the game, there are $4$ equally optimal solutions for the players given by the assignments $\{13,21\}$, $\{12,21\}$, $\{11,23\}$ and $\{11,22\}$ resulting in a Value of $1.54$. Hence, this state lies on the dispersal surface discussed in the proof of Theorem \ref{thm:valueP}. The optimal trajectories in each of these cases are shown in Figure \ref{fig:dispersal}.
	\end{example} 
	\begin{figure}  
		\begin{subfigure}[h]{.175\textwidth} 
			\includegraphics[scale=0.5]{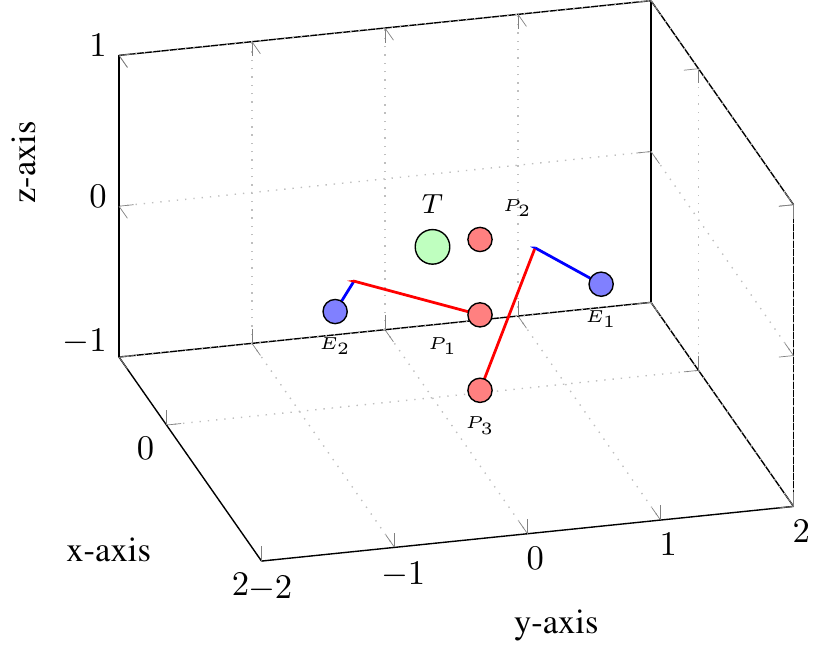}
			\label{fig:dispersal1}
		\end{subfigure} \hspace{20pt}
		\begin{subfigure}[h]{.175\textwidth} 
			\includegraphics[scale=0.5]{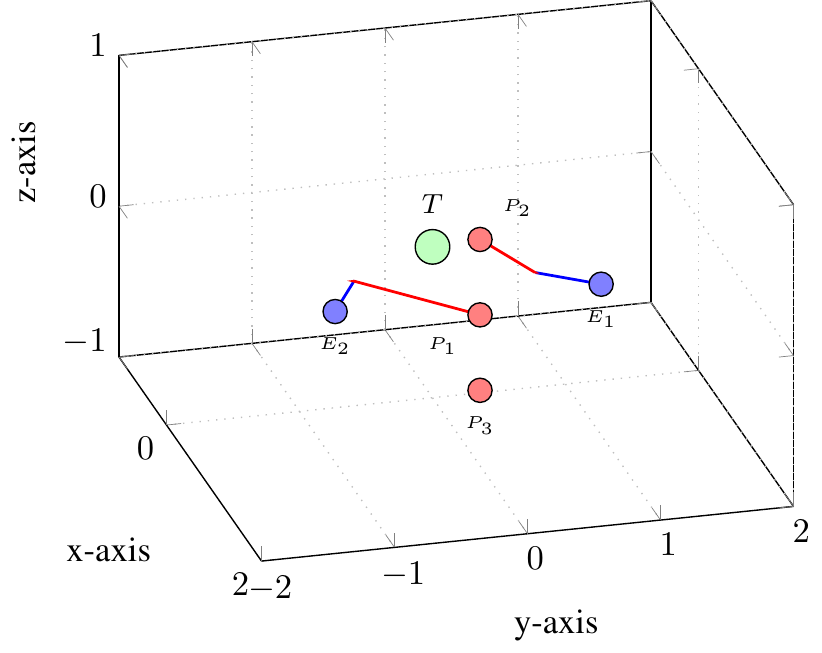}
			\label{fig:dispersal2}
		\end{subfigure} \\
		\begin{subfigure}[h]{.175\textwidth} 
			\includegraphics[scale=0.5]{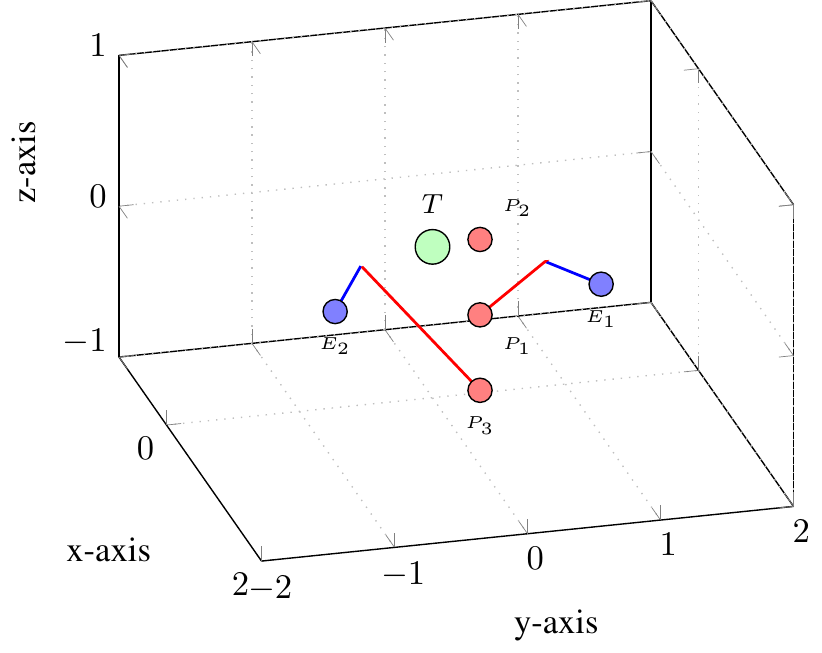}
			\label{fig:dispersal3}
		\end{subfigure} \hspace{20pt}
		\begin{subfigure}[h]{.175\textwidth} 
			\includegraphics[scale=0.5]{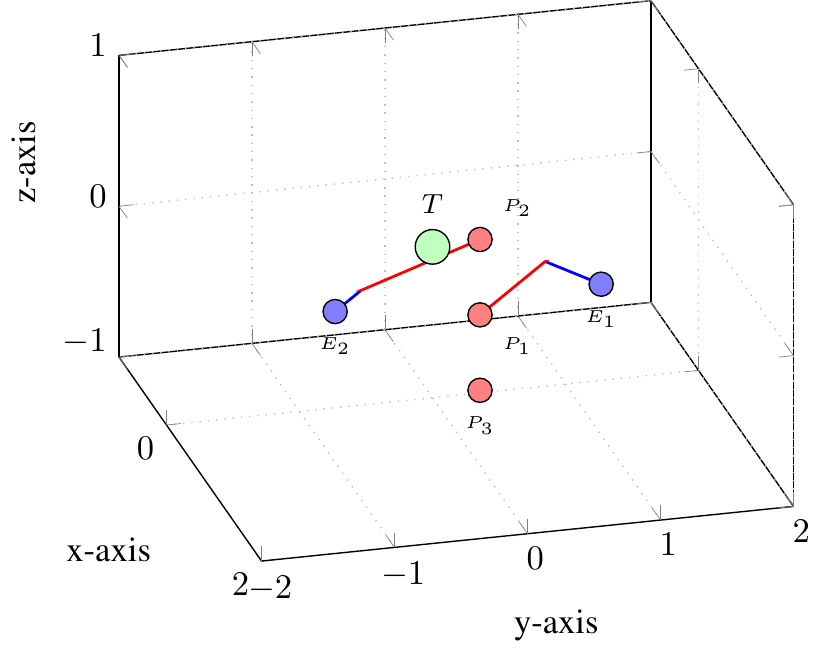}
			\label{fig:dispersal4}
		\end{subfigure} 
		\caption{Optimal assignments for dispersal surface}
		\label{fig:dispersal} 
	\end{figure}
	
	\section{Conclusion and Future Work}
	\label{sec:conc}
	This paper studies an MRADG in 3D space with $n$ pursuers and $m$ evaders, where $n\geq m \geq 1$. We propose an optimal task assignment algorithm based on linear programming to provide optimal trajectories for all players while satisfying the HJI-PDE. Currently, our work makes three critical assumptions (see Assumption \ref{assum:assumption1}) to solve the MRADG. This work can be further extended by relaxing each one of these assumptions. First, the assignment scheme may allow assigning multiple pursuers to one evader. This further restricts the reachable space of the evader and may potentially increase the Value of the game. Next, a multiplayer scenario can be considered with $n<m$. To address such a setting, a mechanism allowing a pursuer to capture multiple evaders must be developed. Finally, the assumption of commitment to the initial assignment may be relaxed. This will allow the pursuer team to dynamically compute the assignment scheme based on the current state making the strategy more robust and able to handle deviations from the optimal strategies. These are some of the directions that can be explored in future work. 
	
	\bibliographystyle{IEEEtran}
	\bibliography{mradg.bib}

\end{document}